\newcommand{\N}{\mathbb{N}}
\newcommand{\R}{\mathbb{R}}
\newcommand{\C}{\mathbb{C}}
\newcommand{\E}{\mathbb{E}}
\newcommand{\conv}{\operatorname{conv}}
\newcommand{\Tr}{\operatorname{Tr}}
\newcommand{\cR}{\mathcal{R}}
\newcommand{\cP}{\mathcal{P}}
\newcommand{\cL}{\mathcal{L}}
\newcommand{\D}{\mathcal{D}}
\newcommand{\cN}{\mathcal{N}}
\newcommand{\cA}{\mathcal{A}}
\newcommand{\Proj}{\operatorname{Proj}}
\newcommand{\argmax}{\operatorname{argmax}}
\newcommand\norm[1]{\left\lVert#1\right\rVert}
\renewcommand{\epsilon}{\varepsilon}
\newtheoremstyle{mythmstyle}
	{\topsep}
	{\topsep}
	{\itshape}
	{}
	{\scshape}
	{.}
	{3pt}
	{}
\theoremstyle{mythmstyle}
\newtheorem{nn}{}[section]
\newtheorem{lemma}[nn]{Lemma}
\newtheorem{theorem}[nn]{Theorem}
\newtheorem{prop}[nn]{Proposition}
\newtheorem{definition}[nn]{Definition}
\newtheorem{claim}[nn]{Claim}
\newtheorem{REMARK}[nn]{Remark}
\newenvironment{cpf}{\begin{trivlist} \item[] {\em Proof of Claim.}}{\hspace*{\stretch{1}} $\diamond$ \end{trivlist}}
\def\ve#1{\mathchoice{\mbox{\boldmath$\displaystyle\bf#1$}}
{\mbox{\boldmath$\textstyle\bf#1$}}
{\mbox{\boldmath$\scriptstyle\bf#1$}}
{\mbox{\boldmath$\scriptscriptstyle\bf#1$}}}
\newcommand{\x}{{\ve x}}
\numberwithin{equation}{section}
\begin{document}

\title{Admissibility of solution estimators for stochastic optimization}

\author{Amitabh Basu\footnote{Department of Applied Mathematics and Statistics, The Johns Hopkins University. Amitabh Basu and Tu Nguyen were supported by NSF grant CMMI1452820 and ONR grant N000141812096. Amitabh Basu also acknowledges support from NSF grant CCF1934979.} \and Tu Nguyen\footnotemark[1] \and Ao Sun\footnotemark[1]}


\maketitle

\begin{abstract} 
We look at stochastic optimization problems through the lens of statistical decision theory. In particular, we address admissibility, in the statistical decision theory sense, of the natural sample average estimator for a stochastic optimization problem (which is also known as the {\em empirical risk minimization (ERM) rule} in learning literature). It is well known that for some simple stochastic optimization problems, the sample average estimator may not be admissible. This is known as {\em Stein's paradox} in the statistics literature. We show in this paper that for optimizing stochastic linear functions over compact sets, the sample average estimator {\em is} admissible. Moreover, we study problems with convex quadratic objectives subject to box constraints. Stein's paradox holds when there are no constraints and the dimension of the problem is at least three. We show that in the presence of box constraints, admissibility is recovered for dimensions $3$ and $4$. 
\end{abstract}

\section{Introduction}
\label{Intro}

A large class of stochastic optimization problems can be formulated in the following way:

\begin{equation}\label{eq:original-so}\min_{x \in X}\{f(x) := \E_{\xi}[F(x, \xi)]\},\end{equation} where $X\subseteq \R^d$ is a fixed feasible region, $\xi$ is a random variable taking values in $\R^m$, and $F:\R^d \times \R^m \to \R$. We wish to solve this problem with access to independent samples of $\xi$. 
The following are two classical examples:

\begin{enumerate}
\item Consider a learning problem with access to labeled samples $(z,y) \in \R^n \times \R$ from some distribution and the goal is to find a function $f \in \mathcal{F}$ in a finitely parametrized hypothesis class $\mathcal{F}$ (e.g., all neural network functions with a fixed architecture) that minimizes expected loss, where the loss function is given by $\ell: \R \times \R \to \R_+$. One can model this using~\eqref{eq:original-so} by setting $d$ to be the number of parameters for $\mathcal{F}$, $m = n+1$, $X\subseteq \R^d$ is the subset that describes $\mathcal{F}$ via the parameters, and $F(f,(z,y)) = \ell(f(z),y)$.
\item When $d=m$, $F(x, \xi) = \| x - \xi\|^2$, $X = \R^d$ and $\xi$ is distributed with mean $\mu$, \eqref{eq:original-so} becomes $$\min_{x \in \R^d}\;\E_{\xi}[\|x - \xi\|^2] = \min_{x \in \R^d}\;\|x - \E[\xi]\|^2 + \mathbb{V}[\xi]$$
In particular, if one knows $\mu:= \E[\xi]$, the optimal solution is given by $x = \mu$. Thus, this stochastic optimization problem becomes equivalent to the classical statistics problem of estimating the mean of the distribution of $\xi$, given access to independent samples. 
\end{enumerate}

We would like to emphasize our data-driven viewpoint on the problem~\eqref{eq:original-so}. In particular, we will not assume detailed knowledge of the distribution of the random variable $\xi$, but only assume that it comes from a large family of distributions. More specifically, we will not assume knowledge of means or higher order moments, and certainly not the exact distribution of $\xi$. This is in contrast to some approaches within the stochastic optimization literature that proceed on the assumption that such detailed knowledge of the distribution is at hand. Such an approach would rewrite~\eqref{eq:original-so} by finding an analytic expression for the expectation $\E_{\xi}[F(x, \xi)]$ (in terms of the known parameters of the distribution of $\xi$), perhaps with some guaranteed approximation if an exact analysis is difficult. The problem then becomes a {\em deterministic} optimization problem\footnote{Note that we are {\em not} referring to what stochastic optimization literature refers to as the deterministic model where all appearances of any random variable in the problem are replaced by its expectation. We are really talking about what the stochastic optimization community would refer to as the stochastic problem and a stochastic solution.}, often a very complicated and difficult one, which is then attacked using novel and innovative ideas of mathematical optimization. See~\cite{birge2011introduction} for a textbook exposition of this viewpoint.

In contrast, as mentioned above, we will assume that the true distribution of $\xi$ comes from a postulated large family of (structured) distributions, and we assume that we have access to data points $\xi^1, \xi^2, \ldots$ drawn independently from the true distribution of $\xi$. This makes our approach distinctly statistical and data-driven in nature. We ``learn" or glean information about the distribution of $\xi$ from the data, which we then use to ``solve"~\eqref{eq:original-so}. 
Statistical decision theory becomes a natural framework for such a viewpoint, to formalize what it even means to ``solve" the problem after ``learning" about the distribution from data. 
We briefly review relevant terminology from statistical decision theory below.

We do not mean to imply that a statistical perspective on stochastic optimization has never been studied before this paper. This is far from true; see~\cite[Chapter 9]{birge2011introduction} and~\cite[Chapter 5]{shapiro2009lectures} for detailed discussions of statistical approaches and methods in stochastic optimization. In~\cite{liyanage2005practical} and~\cite{chu2008solving}, the authors introduce a statistical decision theory perspective that is essentially the same as our framework. In recent parlance, ``data-driven optimization" has been used to describe the statistical viewpoint and has a vast literature; some recent papers closely related to our work are~\cite{gupta2017small,elmachtoub2017smart,bertsimas2018data,esfahani2018data,van2017data}, with~\cite{gupta2017small,elmachtoub2017smart} particularly close in spirit to this paper.  Nevertheless, our perspective is quite different and follows in the footsteps of the inspirational paper of Davarnia and Cornuejols~\cite{davarnia2017estimation}.

\subsection{Statistical decision theory and admissibility}

Statistical decision theory is a mathematical framework for modeling decision making in the face of uncertain or incomplete information. One models the uncertainty by a set of {\em states of nature} denoted by $\Theta$. The decision making process is to choose an {\em action} from a set $\mathcal{A}$ that performs best in a given state of nature $\theta$. To take our stochastic optimization setting, the set of states of nature is given by the family $\D$ of distributions that we believe the true distribution of $\xi$ comes from, and the set of actions $\cA$ is the feasible region $X$, i.e., select $x \in X$ that minimizes $f(x) := \E_{\xi \sim D}[F(x, \xi)]$. In the general framework of decision theory, one defines a {\em loss function} $\cL : \Theta \times \cA \to \R_+$ to evaluate the performance of an action $a\in \cA$ against a state of nature $\theta\in \Theta$. The smaller $\cL(\theta, a)$ is, the better $a \in \cA$ does with respect to the state $\theta\in \Theta$\footnote{We caution the reader that the use of the words ``loss" and ``risk" in statistical decision theory are somewhat different from their use in machine learning literature. In machine learning, the function $F(x, \xi)$ is usually referred to as ``loss" and the function $f(x)$ is referred to as ``risk" in~\eqref{eq:original-so}. Thus Example 1. above becomes a ``risk minimization" problem with an associated ``empirical risk minimization (ERM)" problem when one replaces the expectation by a sample average.}. In our setting of stochastic optimization, we take an action $\hat x \in X$. The natural way to evaluate its performance is via the so-called {\em optimality gap}, i.e., how close is $f(\hat x)$ to the optimal value of~\eqref{eq:original-so}. Therefore, the following is a natural loss function for stochastic optimization: \begin{align}\label{eq:loss-so}\cL(D, x) & := f(x) - f(x(D)) &  \nonumber \\
&=  \E_{\xi \sim D}[F(x, \xi)] - \E_{\xi \sim D}[F(x(D), \xi)],\end{align} where $x(D)$ is an optimal solution to~\eqref{eq:original-so} when $\xi \sim D$. 

The {\em statistical} aspect of statistical decision theory comes from the fact that the state $\theta$ is not revealed directly, but only through data/observations based on $\theta$ that can be noisy or incomplete. 
This is formalized by postulating a parameterized family of probability distributions $\cP:= \{P_\theta: \theta \in \Theta\}$ on a common {\em sample space} $\mathcal{X}$. After observing a realization $y \in \mathcal{X}$ of this random variable, one forms an opinion about what the possible state is and one chooses an action $a\in \cA$. Formally, a {\em decision rule} is a function $\delta: \mathcal{X} \to \mathcal{A}$ giving an action $\delta(y) \in \cA$ when data $y \in \mathcal{X}$ is observed. To take our particular setting of stochastic optimization, one observes data points $\xi^1, \xi^2, \ldots, \xi^n$ that are i.i.d. realizations of $\xi \sim D$; thus, $\mathcal{X} = \underbrace{\R^m \times \R^m \times \ldots \times \R^m}_{n \textrm{ times}}$ with distributions $\cP:= \{ \underbrace{D \times D \times \ldots \times D}_{n \textrm{ times}}: D \in \D \}$ on $\mathcal{X}$ parameterized by the states $D \in \D$. 

Finally, one evaluates decision rules by averaging over the data, defining the {\em risk function} $$\cR(\theta, \delta) := \E_{y \sim P_\theta}[\cL(\theta,\delta(y))].$$

One can think of the risk function as mapping a decision rule to a nonnegative function on the class of distributions $\cP$; this function is sometimes called {\em the risk of the decision rule}. A decision rule is ``good" if its risk has ``low" values. A basic criterion for choosing decision rules in then the following. 
We say that $\delta'$ {\em weakly dominates} $\delta$ if $\cR(\theta, \delta') \leq \cR(\theta,\delta)$ for all $\theta\in \Theta$. We say that $\delta'$ {\em dominates} $\delta$ if, in addition, $\cR(\hat\theta, \delta') < \cR(\hat \theta,\delta)$ for some $\hat \theta\in \Theta$. A decision rule $\delta$ is said to be {\em inadmissible} if there exists another decision rule $\delta'$ that dominates $\delta$. A decision rule $\delta$ is said to be {\em admissible} if it is not dominated by any other decision rule. In-depth discussions of general statistical decision theory can be found in~\cite{berger2013statistical,bickel2015mathematical,lehmann2006theory}.

%
%
%
\subsection{Admissibility of the sample average estimator and our results}\label{sec:results}

We would like to study the admissibility of natural decision rules for solving~\eqref{eq:original-so}. 
As explained above, we put this in the decision theoretical framework by setting the sample space $\mathcal{X} = \underbrace{\R^m \times \ldots \times \R^m}_{n \textrm{ times}}$, where $n$ is the number of i.i.d. observations one makes for $\xi \sim D$ for $D \in \D$, and $\D$ is a fixed family of distributions. A decision rule is now a map $\delta: \underbrace{\R^m \times \R^m \times \ldots \times \R^m}_{n \textrm{ times}} \to X$. The class of distributions on $\mathcal{X}$ is $\cP = \{ \underbrace{D \times D \times \ldots \times D}_{n \textrm{ times}}: D \in \D \}$. The loss function is defined as in~\eqref{eq:loss-so}. 
In this paper, we wish to study the admissibility of the {\em sample average} decision rule $\delta_{SA}$ defined as  \begin{equation}\label{eq:SADR-gen}\delta_{SA}(\xi^1, \ldots, \xi^n) \in \arg\min\bigg\{ \frac{1}{n}\sum_{i=1}^n F(x, \xi^i) : x\in X\bigg\}\end{equation} 
This is a standard procedure in stochastic optimization, and often goes by the name of {\em sample average approximation (SAA)}; in machine learning, it goes by the name of {\em Empirical Risk Minimization (ERM)}. To emphasize the dependence on the number of samples $n$, we introduce a superscript, i.e., $\delta^n_{SA}$ will denote the estimator based on the sample average of the objective from $n$ observations. Moreover, for any $n \in \N$, let $\Delta^n$ be the set of all decision rules $\delta: \underbrace{\R^m \times \R^m \times \ldots \times \R^m}_{n \textrm{ times}} \to X$ such that $\E_{\xi^1, \ldots, \xi^n}[\delta(\xi^1, \ldots, \xi^n)]$ exists.


\medskip

\paragraph{Stein's paradox.} In turns out that there are simple instances of problem~\eqref{eq:original-so} where the sample average rule is {\em inadmissible}. Consider the setting of Example 2 in the Introduction, where $m=d$, $F(x, \xi) = \|x - \xi\|^2$ and $X=\R^d$. We assume $\xi$ is distributed normally $\xi \sim N(\mu,I)$ with unknown mean $\mu$; here $I$ denotes the identity matrix. In the language of statistical decision theory, the states of nature are now parametrized by $\mu \in \R^d$. Since $\E_{\xi \sim D}[F(x, \xi)] =\|x - \E[\xi]\|^2 + \mathbb{V}[\xi]$, the minimizer is simply $x(D) = \E[\xi] = \mu$ with objective value $\mathbb{V}[\xi]$. Evaluating~\eqref{eq:loss-so}, $\cL(\mu, x) = \E_{\xi \sim D}[F(x, \xi)] - \E_{\xi \sim D}[F(x(D), \xi)] = \left(\|x - \E[\xi]\|^2 + \mathbb{V}[\xi]\right) - \mathbb{V}[\xi] = \|x - \mu\|^2.$ Thus, $x = \mu$ minimizes the loss when the state of nature is $\mu$. Consequently, the problem becomes the classical problem of estimating the mean of a Gaussian from samples under ``squared distance loss". Also, the sample average decision rule solves~\eqref{eq:SADR-gen} which is the problem $\min\{ \frac{1}{n}\sum_{i=1}^n \norm{x-\xi^i}^2 : x\in \R^d\}$ and therefore returns the empirical average of the samples, i.e., $\delta^n_{SA}(\xi^1, \ldots, \xi^n) = \overline\xi$ where $\overline \xi := \frac{1}{n}\sum_{i=1}^n \xi^i$. It is well-known that this sample average decision rule is {\em inadmissible} if $d\geq 3$; this was first observed by Stein~\cite{stein1956inadmissibility} and is commonly referred to as {\em Stein's paradox} in statistics literature. The {\em James-Stein estimator}~\cite{james1961estimation} strictly dominates the sample average estimator; see~\cite{berger2013statistical,lehmann2006theory} for an exposition.
\medskip

\paragraph{Our results.} We focus on two particular cases of the stochastic optimization problem~\eqref{eq:original-so}:

\begin{enumerate}
\item $m=d$, $F(x,\xi) =\xi^Tx$, $X\subseteq \R^d$ is a given compact (not necessarily convex) set, and $\xi$ has a {\em Gaussian distribution} with {\em unknown} mean $\mu\in \R^d$ and covariance $\Sigma \in \R^{d\times d}$ denoted by $\xi \sim N(\mu, \Sigma)$. In other words, we optimize an uncertain linear objective over a fixed compact set. Note that, along with linear or convex optimization, we also capture non-convex feasible regions like mixed-integer non-linear optimization or linear complementarity constraints. 
\item $m=d$, $F(x,\xi) = \frac12\|x\|^2 - \xi^Tx$, $X\subseteq \R^d$ is a box constrained set, i.e., $X:= \{x \in \R^d: \ell_i \leq x_i \leq u_i,\;\; i = 1, \ldots, d\}$ ($\ell_i \leq u_i$ are arbitrary real numbers), and $\xi$ has a {\em Gaussian distribution} with {\em unknown} mean $\mu\in \R^d$ and covariance $\Sigma \in \R^{d\times d}$ denoted by $\xi \sim N(\mu, \Sigma)$. Here, we wish to minimize a convex quadratic function with an uncertain linear term over box constraints.
\end{enumerate}

In the first case, we show that there is no ``Stein's paradox" type phenomenon, i.e., the sample average solution is admissible for every $d \in \N$. For the second case, we show that the sample average solution is admissible for $d \leq 4$. Note that in the second situation above, $F(x, \xi)= \frac12 \|x - \xi \|^2 - \frac12 \|\xi\|^2$ and thus the problem of minimizing $\E[\frac12 \|x - \xi \|^2 + \frac12 \|\xi\|^2] = \E[\frac12\|x - \xi\|^2] + \frac12\mathbb{E}[\|\xi\|^2]$ is equivalent to the setting of Stein's paradox (since $\frac12\mathbb{E}[\|\xi\|^2]$ is just a constant), except that we now impose box constraints on $x$. Thus, admissibility is recovered for $d=3,4$ with box constraints. While we are unable to establish it for $d \geq 5$, we strongly suspect that there is no Stein's paradox in {\em any dimension} once box constraints are imposed; in fact, we believe this is true when {\em any} compact constraint set is imposed (see discussion below). 
The precise statements of our results follow.

\begin{theorem}\label{thm:linear} Consider problem~\eqref{eq:original-so} in the setting where $X$ is a given compact set and $F(\xi, x) = \xi^Tx$, and $\xi \sim N(\mu, \Sigma)$ with unknown $\mu$ and $\Sigma$. The sample average rule now simply becomes 
 \begin{equation}\label{eq:SADR}\delta^n_{SA}(\xi^1, \ldots, \xi^n) \in \arg\min\{ \overline \xi^T x : x\in X\}\end{equation} where $\overline \xi := \frac{1}{n}\sum_{i=1}^n \xi^i$ denotes the sample average of the observed objective vectors. 
For any $n \in \N$, and any $\Sigma \in \R^{d\times d}$, we consider the states of nature to be parametrized by $\mu \in \R^d$. Then for every $n \in \N$ and $\Sigma \in \R^{d\times d}$, $\delta^n_{SA}$ is admissible within $\Delta^n$.

\end{theorem}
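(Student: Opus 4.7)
The plan is to argue by contradiction: suppose some $\delta' \in \Delta^n$ dominates $\delta^n_{SA}$, and derive an impossibility via a Stein-type identity. Since the loss $\cL(\mu,x) = \mu^T x - \mu^T x(\mu)$ is linear in the action, the risk of any rule depends only on its expected action. By sufficiency, conditioning on the sample mean $\overline\xi \sim N(\mu, \Sigma/n)$ yields a rule $\tilde\delta'(\overline\xi) := \E[\delta'(\xi^1,\ldots,\xi^n) \mid \overline\xi]$ with values in $\conv(X)$ and the \emph{same} risk function as $\delta'$. Write $g(\mu) := \E_\mu[\delta^n_{SA}(\overline\xi)]$, $g'(\mu) := \E_\mu[\tilde\delta'(\overline\xi)]$, and $G := g' - g$. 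Compactness of $\conv(X)$ and smoothness of the Gaussian kernel ensure that $g, g'$ are $C^\infty$ with bounded values and bounded Jacobian, so the same holds for $G$.

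Two facts now drive the proof. (a) Domination says $\mu^T G(\mu) \leq 0$ for every $\mu \in \R^d$, with strict inequality at some $\mu^*$. (b) Since $\delta^n_{SA}(\overline\xi)$ minimises the linear functional $\overline\xi^T x$ on $X$ (equivalently on $\conv(X)$) and $\tilde\delta'(\overline\xi) \in \conv(X)$, we have $\overline\xi^T \tilde\delta'(\overline\xi) \geq \overline\xi^T \delta^n_{SA}(\overline\xi)$ pointwise. Decomposing $\overline\xi = \mu + (\overline\xi - \mu)$ and differentiating $g(\mu) = \int \delta^n_{SA}(y)\phi_\mu(y)\,dy$ under the integral sign (which bypasses the non-smoothness of $\delta^n_{SA}$), a short calculation yields the Stein-type identity $\E_\mu[(\overline\xi - \mu)^T \delta(\overline\xi)] = \Tr((\Sigma/n)\, J_g(\mu))$ for any bounded measurable $\delta$ whose expectation is $g$. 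Taking expectations of (b) and subtracting therefore gives
\[\mu^T G(\mu) + \Tr\bigl((\Sigma/n)\, J_G(\mu)\bigr) \;\geq\; 0 \qquad \text{for every } \mu \in \R^d.\]
Combining with (a) forces $\Tr((\Sigma/n)\, J_G(\mu)) \geq -\mu^T G(\mu) \geq 0$ everywhere, with strict inequality at $\mu^*$ and hence on an open neighbourhood by continuity of $J_G$.

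The contradiction is produced by integrating against the density $\phi_0$ of $N(0, \Sigma/n)$. Since $\Tr((\Sigma/n)\, J_G) = \nabla_\mu \cdot ((\Sigma/n)\, G)$ and $\nabla_\mu \phi_0(\mu) = -(\Sigma/n)^{-1}\mu\,\phi_0(\mu)$, integration by parts (boundary terms vanish because $G$ is bounded and $\phi_0$ is Gaussian) yields
\[\int_{\R^d} \phi_0(\mu)\, \Tr\bigl((\Sigma/n)\, J_G(\mu)\bigr)\, d\mu \;=\; \int_{\R^d} \mu^T G(\mu)\, \phi_0(\mu)\, d\mu.\]
The left-hand side is strictly positive (the integrand is nonnegative everywhere and strictly positive on an open set), while the right-hand side is at most $0$ by (a); contradiction.

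The main delicate step is the Stein identity: a naive version would require $\delta^n_{SA}$ itself to be smooth, which it is not (it is piecewise constant on the normal cones of $\conv(X)$). The resolution is to differentiate $g$, not $\delta^n_{SA}$, under the integral sign, which is justified by boundedness and dominated convergence. A secondary issue is that $\Sigma$ may be only positive semidefinite; in that case, the components of $\mu$ in $\ker(\Sigma)$ are revealed exactly from any single sample, and the argument above applies after restricting attention to the range of $\Sigma$.
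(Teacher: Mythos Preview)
Your argument is correct. The Stein identity $\E_\mu[(\overline\xi-\mu)^T\delta(\overline\xi)] = \Tr\bigl((\Sigma/n)\,J_g(\mu)\bigr)$ holds for bounded measurable $\delta$ by differentiating the Gaussian kernel (not $\delta$) under the integral, and the divergence identity $\Tr\bigl((\Sigma/n)\,J_G\bigr)=\nabla\!\cdot\bigl((\Sigma/n)G\bigr)$ together with $\nabla\phi_0=-(\Sigma/n)^{-1}\mu\,\phi_0$ make the integration by parts go through exactly as you wrote, with boundary terms vanishing because $G$ is bounded. The chain (b) $\Rightarrow$ $\Tr\bigl((\Sigma/n)\,J_G(\mu)\bigr)\ge -\mu^TG(\mu)\ge 0$, strict at $\mu^*$ by (a), then yields the contradiction cleanly.

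This is a genuinely different route from both of the paper's proofs. The paper's first proof is local: it sets $F(\mu)=\mu^TG(\mu)$, computes $\nabla^2F(0)$ explicitly, and shows its trace (or, for general $\Sigma$, a suitable eigenvalue) is positive via the pointwise inequality $y^T(\eta(y)-\eta^*(y))\ge 0$, concluding that $F>0$ somewhere. Your argument replaces the pointwise Hessian calculation by a single global identity, which is tidier and avoids the eigenvalue step entirely. The paper's second proof shows $\delta^n_{SA}$ is the unique Bayes rule under the prior $N(0,\Sigma)$ and invokes the standard ``unique Bayes $\Rightarrow$ admissible'' theorem; your integration against $\phi_0$ is morally a Bayes-risk comparison, but you extract the contradiction directly from the Stein identity rather than by identifying the Bayes rule and appealing to uniqueness. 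In effect, your proof interpolates between the two: it uses the same pointwise optimality inequality that drives the trace computation in the first proof, but packages it globally in a way that reads like a self-contained Bayes argument without the decision-theoretic machinery.
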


\begin{theorem}\label{thm:quadratic} Let $d\leq 4$. Consider problem~\eqref{eq:original-so} in the setting where $X:= \{x \in \R^d: \ell_i \leq x_i \leq u_i,\;\; i = 1, \ldots, d\}$ ($\ell_i \leq u_i$ are arbitrary real numbers) and $F(\xi, x) = \frac12\|x\|^2 - \xi^Tx$, and $\xi \sim N(\mu, \Sigma)$ with unknown $\mu$ and $\Sigma$. The sample average rule now simply becomes 
 \begin{equation}\label{eq:SADR-Q}\delta^n_{SA}(\xi^1, \ldots, \xi^n) \in \arg\min\left\{ \frac12 \|x\|^2 - \overline \xi^T x : x\in X\right\}\end{equation} where $\overline \xi := \frac{1}{n}\sum_{i=1}^n \xi^i$ denotes the sample average of the observed vectors. 
For any $n \in \N$, and any $\Sigma \in \R^{d\times d}$, we consider the states of nature to be parametrized by $\mu \in \R^d$. Then for every $n \in \N$ and $\Sigma \in \R^{d\times d}$, $\delta^n_{SA}$ is admissible within $\Delta^n$.

\end{theorem}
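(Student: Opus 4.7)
The plan is to reduce the question to admissibility under squared-error loss and then apply Blyth's method, with the dimension restriction $d \leq 4$ arising from an integral estimate.

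First, compute $f(x) = \E_\xi[F(x,\xi)] = \tfrac{1}{2}\norm{x - \mu}^2 - \tfrac{1}{2}\norm{\mu}^2$; the constrained minimizer is $x(\mu) = \Proj_X(\mu)$, the Euclidean projection of $\mu$ onto the box. The loss becomes $\cL(\mu, x) = \tfrac{1}{2}\norm{x-\mu}^2 - \tfrac{1}{2}\norm{\Proj_X(\mu) - \mu}^2$, and since the second term does not depend on $x$, admissibility of $\delta^n_{SA}$ under $\cL$ is equivalent to admissibility under the squared-error loss $\norm{x-\mu}^2$ with estimator in $X$ and $\mu \in \R^d$. Using Rao--Blackwell on the complete sufficient statistic $\bar\xi$, any dominator can be replaced by one depending only on $\bar\xi \sim N(\mu, \Sigma/n)$ without increasing risk, so I would further reduce to $n = 1$ and write $\delta_{SA}(\xi) = \Proj_X(\xi)$ for $\xi \sim N(\mu, \Sigma)$.

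The admissibility step uses Blyth's criterion: it suffices to exhibit a sequence of proper priors $\pi_k$ on $\R^d$ with $\liminf_k \pi_k(B) > 0$ for every open ball $B$, such that the Bayes-risk gap
$$r(\pi_k, \delta_{SA}) - r(\pi_k) = \int_{\R^d} \norm{\Proj_X(\xi) - \E_{\pi_k}[\mu \mid \xi]}^2\, m_{\pi_k}(\xi)\, d\xi$$
vanishes as $k \to \infty$, where $m_{\pi_k}$ is the marginal of $\xi$. A naive choice $\pi_k = N(0, \tau_k I)$ with $\tau_k \to \infty$ fails: the posterior mean approaches $\xi$, not $\Proj_X(\xi)$, leaving a nonvanishing gap. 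Instead, I would use priors whose mass concentrates on a neighborhood of $X$ while remaining sufficiently spread to satisfy Blyth's support condition---so that the posterior mean stays close to $\Proj_X(\xi)$ for typical $\xi$ under $m_{\pi_k}$. Via Tweedie's formula $\E_{\pi_k}[\mu \mid \xi] = \xi + \Sigma \nabla \log m_{\pi_k}(\xi)$, the Bayes-gap integral reduces, on the interior of $X$, to a Brown-type quantity involving $\nabla \log m_{\pi_k}$---exactly the object that governs admissibility of the unconstrained MLE.

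The dimension cutoff $d \leq 4$ is expected to enter here. Splitting the integral into regions where $\xi$ is deep in $\intr(X)$, near $\partial X$, and far from $X$, the interior contribution mirrors Brown's unconstrained analysis (which fails to vanish for $d \geq 3$), but the two extra regions---one for each pair of opposite faces per coordinate---effectively absorb two dimensions of the classical Stein obstruction, pushing the threshold from $d \leq 2$ to $d \leq 4$. The hard part will be making this heuristic quantitative: the prior sequence must be specifically tailored to the box geometry, and the tail and boundary-layer estimates on $m_{\pi_k}$ and $\E_{\pi_k}[\mu \mid \xi]$ near $\partial X$ need to be sharp enough that the full Bayes-risk gap vanishes for $d = 3, 4$. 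The authors' inability to extend the argument to $d \geq 5$, together with their belief that admissibility persists in every dimension under compact constraints, suggests that the $d \leq 4$ bound is an artifact of this dimension bookkeeping rather than a genuine phase transition.
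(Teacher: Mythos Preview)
Your reduction to squared-error loss and the sufficiency argument are fine, and Blyth's method is indeed the right tool. But there is a genuine error in your computation of the Bayes rule, and it derails the rest of the plan.

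You write the Bayes-risk gap as $\int \norm{\Proj_X(\xi) - \E_{\pi_k}[\mu\mid\xi]}^2 m_{\pi_k}(\xi)\,d\xi$ and conclude that the Gaussian prior $N(0,\tau^2\Sigma)$ fails because the posterior mean tends to $\xi$ rather than $\Proj_X(\xi)$. This formula is wrong: the action space is $X$, not $\R^d$, so the Bayes rule is not the posterior mean but its projection onto $X$. Since $\E[\norm{\mu-a}^2\mid\xi] = \norm{\E[\mu\mid\xi]-a}^2 + \mathbb{V}[\mu\mid\xi]$, minimizing over $a\in X$ gives $\delta_{Bayes}(\xi) = \Proj_X\bigl(\E[\mu\mid\xi]\bigr) = \Proj_X\bigl(\tfrac{n\tau^2}{n\tau^2+1}\bar\xi\bigr)$. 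The correct gap is therefore
\[
\int \Bigl(\norm{\E[\mu\mid\bar\xi]-\Proj_X(\bar\xi)}^2 - \norm{\E[\mu\mid\bar\xi]-\Proj_X\bigl(\tfrac{n\tau^2}{n\tau^2+1}\bar\xi\bigr)}^2\Bigr) m(\bar\xi)\,d\bar\xi,
\]
which compares two nearby projections onto $X$, not a projection against an unconstrained point. With this correction the ``naive'' Gaussian prior works directly, and your proposed detour through priors concentrated near $X$ and Tweedie/Brown machinery is unnecessary.

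The actual mechanism for $d\le 4$ in the paper is also different from your heuristic. One decomposes $\R^d$ into regions $F'+N_{F'}$ indexed by faces $F$ of $X$ (with $F'=\tfrac{n\tau^2+1}{n\tau^2}F$). On each region both $\Proj_X(\bar\xi)$ and $\Proj_X\bigl(\tfrac{n\tau^2}{n\tau^2+1}\bar\xi\bigr)$ lie in the same face $F$, so their difference lives only in the $\dim(F)$ ``free'' coordinates and is bounded there by $\tfrac{1}{n\tau^2+1}|\bar\xi_i|$; an orthogonality relation kills the cross term. Vertices ($\dim F=0$) contribute nothing, so the worst surviving face has $\dim F\ge 1$, and Gaussian integration over the remaining $d-\dim F$ coordinates yields a factor $(n\tau^2+1)^{(d-1)/2}$ at worst. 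Combined with the $(n\tau^2+1)^{-2}$ from the shrinkage, the numerator scales like $(n\tau^2+1)^{(d-5)/2}$ relative to the denominator, which vanishes precisely when $d\le 4$. This is box geometry plus a direct estimate, not a Brown-type capacity argument.
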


The following result provides some concrete basis for our belief that compact constraints imply admissibility in any dimension even in the quadratic case; alas, we are unable to establish it even for box constraints when $d \geq 5$.

\begin{theorem}\label{thm:quad-ball} When $X = \{x \in \R^d: \|x \| \leq R\}$ for some $R > 0$, $F(\xi, x) = \frac12\|x\|^2 - \xi^Tx$, and $\xi \sim N(\mu, \Sigma)$, then the sample average rule is admissible for all $d\geq 1$.
\end{theorem}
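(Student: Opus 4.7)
The plan is to apply Blyth's method with a sequence of proper Gaussian priors tuned to the noise covariance. First, the quadratic form $F(\xi,x)=\tfrac12\|x\|^2-\xi^Tx$ gives $f(x)=\tfrac12\|x-\mu\|^2-\tfrac12\|\mu\|^2$, so the constrained optimum is $x(D)=\Proj_B(\mu)$ and
\[
\cL(\mu,x) \;=\; \tfrac12\|x-\mu\|^2 \;-\; \tfrac12\|\Proj_B(\mu)-\mu\|^2,
\]
whose second term does not depend on $x$ and cancels in any comparison of two rules. Thus showing $\delta^n_{SA}(\overline\xi)=\Proj_B(\overline\xi)$ is admissible is equivalent to showing $\Proj_B(\overline\xi)$ is admissible as a $B$-valued estimator of $\mu$ under squared-error loss, where $\overline\xi\sim N(\mu,\Gamma)$ and $\Gamma:=\Sigma/n$.

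For the Blyth sequence I take the proper priors $\pi_c:=N(\0,c\Gamma)$ for $c>0$. Conjugacy yields $\mu\mid\overline\xi\sim N(m_c(\overline\xi),\tfrac{c}{c+1}\Gamma)$ with $m_c(\overline\xi)=\tfrac{c}{c+1}\overline\xi$, so the Bayes rule constrained to $B$ is $\delta^*_c(\overline\xi)=\Proj_B(m_c(\overline\xi))$. The critical feature of this choice -- and the reason for scaling the prior covariance by $\Gamma$ rather than by $I$ -- is that $m_c(\overline\xi)$ and $\overline\xi$ lie on the same ray through the origin. A short case split on whether each of $\overline\xi$ and $m_c(\overline\xi)$ lies in $B$ then shows that $\Proj_B(\overline\xi)=\Proj_B(m_c(\overline\xi))=R\overline\xi/\|\overline\xi\|$ whenever $\|\overline\xi\|>R(c+1)/c$, while on the complementary region
\[
\|\Proj_B(\overline\xi)-m_c\|^2-\|\Proj_B(m_c)-m_c\|^2 \;\leq\; R^2/(c+1)^2.
\]

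The standard Bayes-risk identity $r_c(\delta)-r_c(\delta^*_c)=\tfrac12\,\E_{\overline\xi}[\|\delta(\overline\xi)-m_c\|^2-\|\delta^*_c(\overline\xi)-m_c\|^2]$, together with the fact that the marginal $\overline\xi\sim N(\0,(c+1)\Gamma)$ has density of order $c^{-d/2}$ on any fixed compact neighborhood of the origin, then gives $r_c(\delta^n_{SA})-r_c(\delta^*_c)=O(c^{-(2+d/2)})$. On the other hand, for any fixed nonempty open $U\subseteq\R^d$ one has $\pi_c(U)=\Theta(c^{-d/2})$, so $[r_c(\delta^n_{SA})-r_c(\delta^*_c)]/\pi_c(U)\to 0$ as $c\to\infty$. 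The risk $\cR(\mu,\delta)$ of any $\delta\in\Delta^n$ is continuous in $\mu$ -- the integrand is bounded since $\delta(\overline\xi)\in B$, and continuity follows from dominated convergence -- so Blyth's lemma applies: any rule $\delta'$ strictly dominating $\delta^n_{SA}$ at some $\mu_0$ would force an $\eps>0$ gap in risk on an open neighborhood $U$ of $\mu_0$, whence $r_c(\delta^n_{SA})-r_c(\delta^*_c)\geq\eps\,\pi_c(U)$, contradicting the rate above. The main obstacle, already resolved by the choice of prior, is keeping the posterior mean on the same ray as $\overline\xi$: with the naive $\pi_c=N(\0,cI)$ the two projections disagree on a set of $\Omega(1)$ marginal probability and the Bayes-risk gap cannot be made $o(\pi_c(U))$.
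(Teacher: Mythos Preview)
Your argument is correct and follows the paper's own approach: apply Blyth's method with the conjugate priors $\mu\sim N(0,\tau^2\Sigma)$ (your $c=n\tau^2$), use that the posterior mean $\frac{c}{c+1}\overline\xi$ lies on the same ray as $\overline\xi$ so that $\Proj_B(\overline\xi)$ and $\Proj_B\!\big(\frac{c}{c+1}\overline\xi\big)$ agree outside the compact set $\{\|\overline\xi\|\le R(c+1)/c\}$ and differ by $O(1/c)$ inside it, and conclude that the Bayes-risk gap divided by $\pi_c(U)$ is $O(c^{-2})\to 0$ for every $d\ge 1$. The paper's proof is only a three-line sketch of precisely this computation; your write-up supplies the case split and the explicit rate, and your closing remark about why the prior covariance must be proportional to $\Sigma$ (so the posterior mean stays on the ray through $\overline\xi$) makes explicit the reason behind the paper's choice of prior.
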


We present two different proofs of Theorem~\ref{thm:linear}. The first one, presented in Sections~\ref{sec:first-proof-linear} and~\ref{sec:gen-covariance} uses a novel proof technique for admissibility, to the best of our knowledge. The second proof, presented in Section~\ref{sec:lin-proof} uses the conventional idea of showing that the sample average estimator is the (unique) Bayes estimator under an appropriate prior. We feel that the first proof technique could be useful for future research into the question of admissibility of solution estimators for stochastic optimization. The second method using Bayes estimators is easier to generalize to the quadratic settings of Theorems~\ref{thm:quadratic} and~\ref{thm:quad-ball}, and thus forms a natural segue into their proofs presented in Section~\ref{sec:quad-proof}.

\subsection{Comparison with previous work} 

The statistical decision theory perspective on stochastic optimization presented here follows the framework of~\cite{davarnia2017estimation} and~\cite{davarniabayesian}. 
In particular, the authors of~\cite{davarnia2017estimation} consider admissibility of solution estimators in two different stochastic optimization problems: one where $X = \R^d$ and $F(x,\xi) = x^TQx + \xi^T x$ for some fixed matrix positive definite matrix $Q$ (i.e., unconstrained convex quadratic minimization), and the second one where $X$ is the unit ball and $F(x,\xi) = \xi^T x$. $\xi$ is again assumed to be distributed according to a normal distribution $N(\mu, I)$ with unknown mean $\mu$. They show that the sample average approximation is {\em not} admissible in general for the first problem, and it is admissible for the second problem. Note that the second problem is a special case of our setting. In both these cases, there is a closed-form solution to the deterministic version of the optimization problem, which helps in the analysis. This is not true for the general optimization problem we consider here (even if we restrict $X$ to be a polytope, we get a linear program which, in general, has no closed form solution).

Another difference between our work and \cite{davarnia2017estimation} is the following. In \cite{davarnia2017estimation}, the question of admissibility is addressed within a smaller subset of decision rules that are ``decomposable" in the sense that any decision rule is of the form $\tau\circ \kappa$, where $\kappa: \underbrace{\R^d \times \R^d \times \ldots \times \R^d}_{n \textrm{ times}} \to \R^d$ maps the data $\xi^1, \ldots,\xi^n$ to a vector $\hat\mu \in \R^d$ and then $\tau :\R^d \to X$ is of the form $\tau(\hat\mu) \in \arg\min\{\hat\mu^T x : x \in X\}$. In other words, one first estimates the mean of the uncertain objective (using any appropriate decision rule) and then uses this estimate to solve a deterministic optimization problem. In the follow-up work~\cite{davarniabayesian}, the authors call such decision rules {\em Separate estimation-optimization (Separate EO) schemes} and more general decision rules as {\em Joint estimation-optimization (Joint EO) schemes}. In this paper, we establish admissibility of the sample average estimator within general decision rules (joint EO schemes in the terminology of~\cite{davarniabayesian}). The only condition we put on the decision rules is that of integrability, which is a minimum requirement needed to even define the risk of a decision rule. Note that proving inadmissibility within separate EO schemes implies inadmissibility within joint EO schemes. On the other hand, establishing admissibility within joint EO schemes means defending against a larger class of decision rules. The general concept of joint estimation-optimization schemes also appears in~\cite{chu2008solving,liyanage2005practical,elmachtoub2017smart}, presented in slightly different vocabulary.

As mentioned before, the quadratic convex objective has been studied in statistics in the large body of work surrounding Stein's paradox, albeit not in the stochastic optimization language that we focus on here. Moreover, all of this classical work is for the unconstrained problem. To the best of our knowledge, the version with box constraints has not been studied before (but see~\cite{charras1991bayes,casella1981estimating,marchand2001improving,karimnezhad2011estimating,kumar2008estimating,fourdrinier2010bayes,hartigan2004uniform,bickel1981minimax,gatsonis1987estimation,gorman1990lower} and the book~\cite{fourdrinier2018shrinkage} for a related, but different, statistical problem that has received a lot of attention). 
It is very intriguing (at least to us) that in the presence of such constraints, admissibility is recovered for dimensions $d = 3, 4$; recall that for the unconstrained problem, the sample average solution is admissible only for $d = 1,2$ and inadmissible for $d \geq 3$. 

\subsection{Admissibility and other notions of optimality} We end our discussion of the results with a few comments about other optimality notions for decision rules. In large sample statistics, one often considers the behavior of decision rules when $n\to \infty$ (recall $n$ is the number of samples). A sequence of decision rules $\delta^n$ (each $\delta^n$ is based on $n$ i.i.d samples) is said to be {\em asymptotically inadmissible} if there exists a decision rule sequence $\bar\delta^n$ such that $\lim_{n\to \infty} \frac{R(\theta,\bar\delta^n)}{R(\theta,\delta_n)} \leq 1$ for every $\theta$, and for some $\hat\theta$ the limiting ratio is strictly less than 1. Admissibility for every $n\in \N$ does not necessarily imply asymptotic admissibility~\cite[Problem 4, page 200]{bickel2015mathematical}, and asymptotic admissibility does not imply admissibility for finite $n\in \N$, i.e., there can be decision rules that are inadmissible for every $n\in \N$ and yet be asymptotically admissible. Thus, the small-sample behaviour (fixed $n\in \N$) and large sample behavior ($n \to \infty$) can be quite different. One advantage of proving asymptotic admissibility is that it also implies the {\em rate} of convergence of the risk (as a function of $n$) is optimal; such rules are called {\em rate optimal}. Unfortunately, our admissibility results about $\delta^n_{SA}$ do not immediately imply asymptotic admissibility or rate optimality. Standard techniques for proving rate optimality such as Hajek-Le Cam theory~\cite[Chapter 8]{van2000asymptotic} cannot be applied because regularity assumptions about the decision rules are not satisfied in our setting. For example, in the linear objective case discussed above, $\delta^n_{SA}$ has a degenerate distribution that is supported on the boundary of the feasible region $X$. Similarly, in the quadratic case, $\delta^n_{SA}$'s distribution is supported on the compact domain $X$, with majority of the mass on the boundary when $\mu$ is outside $X$. This rules out any possibiity of ``asymptotic normality" or ``local asymptotic minimaxity" results~\cite[Chapters 7, 8]{van2000asymptotic}.

While we are unable to prove rate optimality for $\delta^n_{SA}$, it is reasonably straightforward to show that $\delta^n_{SA}$ is {\em consistent} in the sense that $R(\theta, \delta^n_{SA}) \to 0$ as $n\to \infty$. This can be derived from consistency results in stochastic optimization literature~\cite[Chapter 5]{shapiro2009lectures}, but we present the argument here for completeness. In the linear objective case, the loss for $\delta^n_{SA}$ is given by $\mathcal{L}(\mu, \delta^n_{SA}) = \mu^T\delta^n_{SA} - \mu^Tx(\mu) = (\mu - \bar\xi)^T\delta^n_{SA} + \bar \xi^T\delta^n_{SA} - \mu^Tx(\mu) \leq (\mu - \bar\xi)^T\delta^n_{SA} + \bar \xi^Tx(\mu) - \mu^Tx(\mu)$ since $\delta^n_{SA}$ is the minimizer with respect to $\bar\xi$. Thus, $\mathcal{L}(\mu,\delta^n_{SA}) \leq (\mu - \bar\xi)^T(\delta^n_{SA} - x(\mu)) \leq \|\mu - \bar\xi)\|\cdot K$ by the Cauchy-Schwarz inequality, where $K$ is the diameter of the compact feasible region $X$. Therefore, $R(\mu, \delta^n_{SA})\leq K\E [\|\mu - \bar\xi)\|]$. Since the sample average $\bar\xi$ has a normal distribution with mean $\mu$ and variance that scales like $O(1/n)$, $R(\mu, \delta^n_{SA}) \to 0$ with rate $O(1/\sqrt{n})$. A similar argument can be made in the quadratic objective case. However, we are unable to show that $O(1/\sqrt{n})$ is the optimal rate in either case.

There is a large body of literature on {\em shrinkage estimators} in the unconstrained, quadratic objective setting. A relatively recent insight~\cite{xie2012sure} shows that as $d\to\infty$ (recall $d$ is the dimension), a certain class of shrinkage estimators (called {\em SURE estimators}) have risk functions that dominate any other shrinkage estimator's risk, and hence the sample average estimator's risk, with just a single sample. This potentially suggests that the phenomenon presented here, where admissibility of $\delta^n_{SA}$ is recovered for $d=3,4$, holds only for small dimensions and for large enough dimensions, the sample average estimator remains inadmissible. However, this is not immediate because of two reasons: 1) the value of $d$ for which the SURE estimator in~\cite{xie2012sure} starts to dominate any other estimator depends on the parameter $\mu$, and 2) the setting in~\cite{xie2012sure} is still unconstrained optimization. In fact, as stated earlier, we strongly suspect that with compact constraints, admissibility of $\delta^n_{SA}$ holds for {\em all} dimensions (see Theorem~\ref{thm:quad-ball}). We remark that SURE estimators need not be admissible themselves~\cite{johnstone1988inadmissibility}.

There are other notions of optimality of decision rules even in the small/finite sample setting. For example, the {\em minimax} decision rule minimizes the sup norm of the risk function, i.e., one solves $\inf_{\delta}\sup_{\theta}R(\theta,\delta)$. In general, admissibility does not imply minimaxity, nor does minimaxity imply admissibility. Of course, if a minimax rule is inadmissible, then the dominating rule is also minimax and is certainly to be preferred, unless computational concerns prohibit this. In many settings however (e.g., estimation in certain exponential and group families~\cite[Chapter 5]{lehmann2006theory}), minimax rules are also provably admissible and thus minimaxity is a more desirable criterion. 

Generally speaking, admissibility is considered a weak notion of optimality because admissible rules may have undesirable properties like very high risk values for certain states of the world. Moreover, as noted above, inadmissible rules may have optimal large sample behavior. Nevertheless, it is useful to know if widely used decision rules such as sample average approximations satisfy the basic admissibility criterion, because if not, then one could use the dominating decision rule unless it is computationally much more expensive.

\section{Technical Tools}\label{sec:tech-tools}

We first recall a basic fact from calculus.

\begin{lemma}\label{lem:calculus} Let $F: \R^m \to \R$ be a twice continuously differentiable map such that $F(0) = 0$. Suppose $\nabla^2F(0)$ is not negative semidefinite; in other words, there is a direction $d \in \R^d$ of positive curvature, i.e., $d^T\nabla^2F(0)d > 0$. Then there exists $z \in \R^m$ such that $F(z) > 0$.

\end{lemma}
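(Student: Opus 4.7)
The plan is to use a second-order Taylor expansion of $F$ along the direction of positive curvature $d$, combined with a symmetrization trick that kills the unknown first-order term.

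First I would write, by Taylor's theorem applied at the origin with $F(0)=0$,
\[
F(td) = t\,\nabla F(0)^T d + \tfrac{t^2}{2}\, d^T \nabla^2 F(0)\, d + r(t),
\]
where the remainder satisfies $r(t)/t^2 \to 0$ as $t\to 0$, thanks to the hypothesis that $\nabla^2 F$ is continuous at $0$. The obvious annoyance is that the gradient $\nabla F(0)$ is not assumed to vanish, so $F(td)$ could be dragged negative by its first-order term for every small $t>0$.

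To sidestep this, I would add the expansions at $+td$ and $-td$:
\[
F(td) + F(-td) = t^2\, d^T \nabla^2 F(0)\, d + r(t) + r(-t).
\]
The linear terms cancel exactly, and dividing through by $t^2$ gives
\[
\frac{F(td) + F(-td)}{t^2} \;=\; d^T \nabla^2 F(0)\, d \;+\; \frac{r(t) + r(-t)}{t^2}.
\]
By the hypothesis, the first term on the right is a fixed positive number, while the second term tends to $0$ as $t\to 0$. Hence for all sufficiently small $t>0$ we have $F(td) + F(-td) > 0$, which forces at least one of $F(td)$, $F(-td)$ to be strictly positive. Taking $z$ to be whichever of $\pm td$ achieves this completes the argument.

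There is no real obstacle here; the only thing to be careful about is invoking the correct form of Taylor's theorem (the Peano form with $o(t^2)$ remainder, which only requires twice continuous differentiability at the origin, not $C^2$ on a whole neighborhood beyond what the hypothesis already supplies). The symmetrization step is the one substantive idea, and it is exactly what one needs to make the unknown sign of $\nabla F(0)^T d$ irrelevant.
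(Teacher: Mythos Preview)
Your proof is correct and takes a genuinely different route from the paper's. The paper argues by a two-case split: if $\nabla F(0) \neq 0$, a small step in the gradient direction, $z = \lambda \nabla F(0)$, makes $F$ positive by first-order Taylor expansion; if $\nabla F(0) = 0$, a small step in the curvature direction, $z = \lambda d$, makes $F$ positive by second-order Taylor expansion. Your symmetrization trick $F(td) + F(-td)$ removes the need for this case analysis by forcing the unknown linear term to cancel outright. The paper's argument is the most direct ``first thought'' proof, while yours is a bit slicker in that it uses only the direction $d$ supplied by the hypothesis and never needs to identify or reason about $\nabla F(0)$ separately.
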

\begin{proof} If $\nabla F(0) \neq 0$, then there exist $\lambda > 0$ such that $F(z) > 0$ for $z = \lambda \nabla F(0)$ since $F(0) = 0$. Else, if $\nabla F(0) = 0$ then there exists $\lambda > 0$ such that $F(z) > 0$ for $z = \lambda d$, where $d$ is the direction of positive curvature at $0$.\end{proof}

We will need the following central definition and result from statistics. See e.g., Section 6, Chapter 1 in~\cite{lehmann2006theory}.

\begin{definition} A {\em statistic} is a function $T : \chi \to \R^m$, i.e., it is any function that maps the data to a vector (or a scalar if $m=1$). Let $\cP$ be a family of distributions on the sample space $\chi$. A {\em sufficient statistic for $\cP$} is a statistic on $\chi$ such that the conditional distribution on $\chi$ given $T = t$ does not depend on the distribution from $\cP$, for all $t \in \R^m$.
\end{definition}

\begin{prop}\label{prop:avg-normal-suff} Let $\chi = \underbrace{\R^d \times \R^d \times \ldots \times \R^d}_{n \textrm{ times}}$ and let $\cP = \{ \underbrace{N(\mu,I) \times N(\mu,I) \times \ldots \times N(\mu,I)}_{n \textrm{ times}}: \mu \in \R^d \}$, i.e., $(\xi^1, \ldots, \xi^n) \in \chi$ are i.i.d samples from the normal distribution $N(\mu,I)$. Then $T(\xi^1\ldots, \xi^n) = \overline\xi :=  \frac{1}{n}\sum_{i=1}^n \xi^i$ is a sufficient statistic for $\cP$.
\end{prop}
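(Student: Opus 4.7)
The plan is to invoke the Fisher–Neyman factorization criterion, which characterizes sufficiency in terms of a convenient factorization of the joint density. Writing the joint density of $n$ i.i.d.\ $N(\mu,I)$ samples,
$$p_\mu(\xi^1,\ldots,\xi^n) \;=\; (2\pi)^{-nd/2}\exp\!\left(-\tfrac{1}{2}\sum_{i=1}^n \|\xi^i - \mu\|^2\right),$$
I would complete the square,
$$\sum_{i=1}^n \|\xi^i - \mu\|^2 \;=\; \sum_{i=1}^n \|\xi^i\|^2 \;-\; 2n\,\mu^T \overline{\xi} \;+\; n\|\mu\|^2,$$
which splits the density as
$$p_\mu(\xi^1,\ldots,\xi^n) \;=\; \underbrace{(2\pi)^{-nd/2}\exp\!\left(-\tfrac{1}{2}\sum_{i=1}^n \|\xi^i\|^2\right)}_{h(\xi^1,\ldots,\xi^n),\text{ free of }\mu} \;\cdot\; \underbrace{\exp\!\left(n\mu^T\overline{\xi} - \tfrac{n}{2}\|\mu\|^2\right)}_{g_\mu(\overline{\xi}),\text{ depends on data only via }\overline\xi}.$$
This is precisely the hypothesis of the factorization theorem, which then yields that $T(\xi^1,\ldots,\xi^n) = \overline{\xi}$ is sufficient for $\cP$.

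As a backup in case a self-contained argument is preferred, one can verify the definition directly. The full vector $(\xi^1,\ldots,\xi^n)$ is multivariate Gaussian, and the linear change of variables $(\xi^1,\ldots,\xi^n) \mapsto (\overline{\xi},\, \xi^1 - \overline{\xi},\,\ldots,\,\xi^{n-1} - \overline{\xi})$ produces jointly Gaussian coordinates. A direct covariance calculation shows that each residual $\xi^i - \overline{\xi}$ is uncorrelated with $\overline{\xi}$, hence independent of $\overline{\xi}$ by joint Gaussianity; moreover the law of the residual vector has mean $\mathbf{0}$ and a covariance structure not depending on $\mu$. Therefore, conditional on $\overline{\xi}=t$, the distribution of $(\xi^1,\ldots,\xi^n)$ is obtained by adding $t$ to the ($\mu$-free) law of the residuals, so the conditional distribution does not depend on $\mu$.

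I do not anticipate any real obstacle; this is a textbook calculation whose only delicate point is recognizing that completing the square isolates the $\mu$-dependence entirely in terms of $\overline{\xi}$ and $\|\mu\|^2$. The factorization-theorem route is cleaner and preferable for the write-up, while the explicit decomposition into $\overline{\xi}$ and orthogonal residuals is useful if later proofs need the additional fact that $\overline\xi$ and the residuals are independent.
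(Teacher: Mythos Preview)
Your proof via the Fisher--Neyman factorization criterion is correct and is the standard textbook argument. The paper does not actually prove this proposition; it simply cites it as a well-known fact (referring to Section~6, Chapter~1 of Lehmann's \emph{Theory of Point Estimation}), so your write-up in fact supplies more detail than the paper itself.
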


We will also need the following useful property for the family of normal distributions $\{N(\mu, I): \mu \in \R^m\}$. Indeed the following result is true for any {\em exponential family} of distributions; see Theorem 5.8, Chapter 1 in~\cite{lehmann2006theory} for details.

\begin{theorem}\label{thm:diff-normal} Let $f: \R^m \to \R^d$ be any integrable function. The function $$h(\mu) :=
\int_{\R^m} f(y)e^{\frac{-n\| y - \mu \|^2}{2}} dy$$ is continuous and has derivatives of all orders with respect to $\mu$, which can be obtained by differentiating under the integral sign.
\end{theorem}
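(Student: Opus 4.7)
The plan is to verify, inductively in the order of differentiation, the hypotheses of the classical differentiation-under-the-integral theorem, using the dominated convergence theorem both for pulling derivatives inside the integral and for establishing continuity of those derivatives.

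First I would compute, by induction on $k = |\alpha|$, that for every multi-index $\alpha$,
\[
\partial^\alpha_\mu\, e^{-n\|y - \mu\|^2/2} \;=\; P_\alpha(y - \mu)\, e^{-n\|y - \mu\|^2/2},
\]
where $P_\alpha$ is a polynomial of degree $k$ whose coefficients depend only on $n$ and $\alpha$. The base case follows from $\partial_{\mu_i} e^{-n\|y-\mu\|^2/2} = n(y_i - \mu_i)\, e^{-n\|y-\mu\|^2/2}$, and the inductive step is a routine product-rule calculation. The key structural observation is then that the fixed function $u \mapsto P_\alpha(u)\, e^{-n\|u\|^2/2}$ is bounded on $\R^m$, say by a constant $C_\alpha$, because any polynomial times a Gaussian is bounded on $\R^m$.

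Consequently, for every $\mu \in \R^m$ and every $y \in \R^m$,
\[
\bigl| f(y)\, \partial^\alpha_\mu e^{-n\|y - \mu\|^2/2}\bigr| \;\leq\; C_\alpha\, |f(y)|,
\]
and $C_\alpha |f|$ is integrable by the standing hypothesis that $f \in L^1(\R^m)$. This supplies the majorant required by the standard differentiation-under-the-integral theorem, which then gives existence of $\partial^\alpha h(\mu)$ at every $\mu \in \R^m$, with the value obtained by differentiating under the integral. Continuity of $\partial^\alpha h$ follows from a further application of dominated convergence to the integrand along a sequence $\mu_j \to \mu$, again using the same majorant $C_\alpha |f|$. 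Iterating the argument on $k$ delivers smoothness of all orders and the differentiation-under-the-integral formula for each $\partial^\alpha h$.

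The only subtle point — though ultimately not a real obstacle — is that $f$ is assumed merely integrable, not bounded or rapidly decaying, so one might worry that the polynomial factor $P_\alpha(y - \mu)$ brought down by differentiation could spoil integrability of the majorant. The ``polynomial $\times$ Gaussian is bounded'' observation precisely resolves this: the $y$-dependence coming from the polynomial is absorbed by the Gaussian, leaving a constant multiple of $|f|$ as the dominating function. This makes explicit the general exponential-family fact cited from~\cite{lehmann2006theory} in the Gaussian case needed here.
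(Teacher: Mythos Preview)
Your argument is correct. The paper does not prove this theorem at all: it simply records it as a known fact about exponential families and cites Theorem~5.8, Chapter~1 of~\cite{lehmann2006theory}. Your proposal supplies a self-contained proof in the Gaussian special case, and the key device --- that $u \mapsto P_\alpha(u)e^{-n\|u\|^2/2}$ is globally bounded, so $C_\alpha|f|$ serves as a $\mu$-independent $L^1$ majorant for every derivative --- is exactly the right way to push the dominated-convergence machinery through without any growth assumption on $f$ beyond integrability. This is precisely the specialization of the exponential-family argument in Lehmann--Casella to the Gaussian kernel, and it is slightly cleaner than the general version because the polynomial-times-Gaussian bound is immediate here. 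There is nothing to add; your write-up could serve as the omitted proof.
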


In the rest of the paper, for any vector $v$, $v_j$ will denote the $j$-th coordinate, and for any matrix $A \in \R^{p \times q}$, $A_{ij}$ will denote entry in the $i$-th row and $j$-th column.

We need one further result on the geometry of the hypercube which is easy to verify. We recall that for any closed, convex set $C\subseteq \R^d$ and point $x\in C$, the {\em normal cone at $x$} is defined to be the set of all vectors $c \in \R^d$ such that $x \in \argmax_{y \in C} c^T y$. We extend this concept to any face $F$ of $C$: The {\em normal cone at the face $F$} is defined to be the set of all vectors $c \in \R^d$ such that $F \subseteq \argmax_{y \in C} c^T y$.

\begin{lemma}\label{lem:hypercube-faces}
Let $X= \{x \in \R^d: -u_i \leq x_i \leq u_i \;\; i = 1, \ldots, d\}$ be a box centered at the origin. For any face $F \subseteq X$ of $X$ (possibly with $F=X$), let $I_F^+\subseteq \{1, \ldots, d\}$ be the subset of coordinates which are set to the bound $u_i$ for all points in $F$, $I_F^-\subseteq \{1, \ldots, d\}$ be the subset of coordinates which are set to the bound $-u_i$ for all points in $F$, and $N_F$ denote the normal cone at $F$. Then the following are true:
\begin{enumerate}
\item For any face $F \subseteq X$, $$F + N_F = \left\{x \in \R^d: \begin{array}{ccl} x_i \geq u_i & & i \in I_F^+ \\x_i \leq -u_i && i \in I_F^- \\
-u_i \leq x_i \leq u_i && i \not\in I_F^+ \cup I_F^- \\
\end{array}\right\}.$$
\item The interior of $F + N_F$ is disjoint from the interior of $F' + N_F'$ whenever $F \neq F'$ and we have the following decomposition of $\R^d$: $$\R^d = \bigcup_{F \textrm{ face of } X} F + N_F$$
\end{enumerate}


\end{lemma}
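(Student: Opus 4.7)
My plan is to reduce everything to a coordinate-wise analysis, since both the box $X$ and the linear functionals defining the normal cones decouple over the $d$ coordinates.

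First, I would give an explicit description of the normal cone $N_F$. A face $F$ of $X$ is precisely
$F=\{x:\ x_i=u_i\text{ for }i\in I_F^+,\ x_i=-u_i\text{ for }i\in I_F^-,\ -u_i\le x_i\le u_i\text{ otherwise}\}$.
For $c\in\R^d$, the condition $F\subseteq\arg\max_{y\in X}c^Ty$ factors into the coordinate-wise conditions that $y_i=u_i$ maximize $c_iy_i$ on $[-u_i,u_i]$ for each $i\in I_F^+$, that $y_i=-u_i$ maximize $c_iy_i$ for each $i\in I_F^-$, and that every $y_i\in[-u_i,u_i]$ be a maximizer for the remaining coordinates. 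This forces $c_i\ge 0$ for $i\in I_F^+$, $c_i\le 0$ for $i\in I_F^-$, and $c_i=0$ otherwise. Hence
$N_F=\{c\in\R^d:\ c_i\ge 0\text{ for }i\in I_F^+,\ c_i\le 0\text{ for }i\in I_F^-,\ c_i=0\text{ otherwise}\}$.

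Adding $F$ and $N_F$ is then immediate on each coordinate: for $i\in I_F^+$ we get $u_i+[0,\infty)=[u_i,\infty)$; for $i\in I_F^-$ we get $(-\infty,-u_i]$; and for the remaining coordinates the Minkowski sum is $[-u_i,u_i]+\{0\}=[-u_i,u_i]$. This yields exactly the description in part 1.

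For part 2, the characterization in part 1 shows that the interior of $F+N_F$ is the cartesian product, over coordinates, of the open intervals $(u_i,\infty)$, $(-\infty,-u_i)$, or $(-u_i,u_i)$, according as $i$ lies in $I_F^+$, $I_F^-$, or neither. Given two distinct faces $F\ne F'$, the sets $(I_F^+,I_F^-)$ and $(I_{F'}^+,I_{F'}^-)$ differ, so for some coordinate $i$ the two open intervals are disjoint, giving disjointness of the interiors. To cover $\R^d$, pick any $x\in\R^d$ and build a face by setting $I^+=\{i:x_i\ge u_i\}$, $I^-=\{i:x_i\le -u_i\}$ (these are disjoint whenever $u_i>0$; in the degenerate case $u_i=0$ one may place $i$ in either set). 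The face $F$ with $I_F^+=I^+$, $I_F^-=I^-$ contains $x$ in $F+N_F$ by construction.

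The argument is essentially bookkeeping; the only mild subtlety is justifying that the normal cone decouples across coordinates, which follows from separability of linear functionals on a box and which I would state carefully but without belaboring. I do not anticipate a real obstacle.
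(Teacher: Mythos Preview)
Your argument is correct: the coordinate-wise decoupling of both the face and the normal cone is exactly the right observation, and the Minkowski sum, disjointness, and covering all follow immediately from it. The paper does not actually prove this lemma---it simply introduces it as a result on the geometry of the hypercube that ``is easy to verify''---so your write-up supplies precisely the routine verification the authors omitted.
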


\section{Proof of Theorem~\ref{thm:linear} (the scenario with linear objective)}\label{sec:first-proof-linear}

\subsection{When the covariance matrix is the identity}

\begin{proof}[Proof of Theorem~\ref{thm:linear} when $\Sigma = I$] As introduced in the previous sections, $\overline\xi$ will denote the sample average of $\xi^1, \ldots, \xi^n$. Consider an arbitrary decision rule $\delta \in \Delta^n$. Consider the conditional expectation $$\eta(y) = \E_{\xi^1, \ldots, \xi^n}[\delta(\xi^1,\ldots, \xi^n)| \overline\xi = y].$$ Observe that $\eta(y) \in \conv(X)$ (i.e., the convex hull of $X$, which is compact since $X$ is compact) since $\delta$ maps into $X$. Moreover, since $\overline\xi$ is a sufficient statistic for the family of normal distributions by Proposition~\ref{prop:avg-normal-suff}, $\eta(y)$ does not depend on $\mu$. This is going to be important below. To maintain intuitive notation, we will also say that $\delta^n_{SA}$ is given by $\delta^n_{SA}(\xi^1, \ldots, \xi^n) = \eta^*(\overline \xi)$, where $\eta^*(y)$ returns a point in $\arg\min\{\;y^T x \; : \; x \in X\}$. Note also that for any action $x\in X$,~\eqref{eq:loss-so} evaluates to $$\cL(\mu, x) =  \mu^Tx - \mu^Tx(\mu),$$ where $x(\mu)$ denotes the optimal solution to the problem $\min\left\{\mu^T x : x \in X\right\}$. Using the law of total expectation, 
\[\begin{array}{rcl}
R(\mu, \delta) &= &\E_{\xi^1, \ldots, \xi^n}[\cL(\mu, \delta(\xi^1,\ldots, \xi^n))]\\
&= &\E_{\xi^1, \ldots, \xi^n}[\mu^T\delta(\xi^1,\ldots, \xi^n)] - \mu^Tx(\mu) \\
&=& \E_y[\E_{\xi^1, \ldots, \xi^n}[\mu^T\delta(\xi^1,\ldots, \xi^n)| \overline\xi = y]] - \mu^Tx(\mu) \\
&=& \E_y[\mu^T \eta(y)] - \mu^Tx(\mu)
\end{array}
\]

If $\eta = \eta^*$ almost everywhere, then $R(\mu,\delta) = R(\mu, \delta^n_{SA})$ for all $\mu \in \R^d$, and we would be done. So in the following, we assume that $\eta \neq \eta^*$ on a set of strictly positive measure. This implies the following

\begin{claim}\label{claim:diff-decision} For all $y\in \R^d$, $y^T\eta(y) \geq y^T\eta^*(y)$ and the set $\{y \in \R^d\; :\;y^T\eta(y) > y^T\eta^*(y)\}$ is of strictly positive measure.\end{claim}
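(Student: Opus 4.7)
The claim has two parts; the first is essentially a definitional observation and the second requires a regularity argument about linear optimization over $\conv(X)$.

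For the first inequality, the plan is to observe that a linear functional attains its minimum over a compact set at the same value it attains over the convex hull, since the minimum is achieved at an extreme point of $\conv(X)$, which lies in $X$ (using compactness of $X$). Hence $y^T\eta^*(y) = \min_{x \in X} y^Tx = \min_{x \in \conv(X)} y^Tx$. Because $\eta(y) \in \conv(X)$ (it is a conditional expectation of points in $X$), the bound $y^T\eta(y) \geq y^T\eta^*(y)$ follows immediately.

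For the second (strict-inequality on a set of positive measure), the plan is to exploit uniqueness of linear minimizers over $\conv(X)$ almost everywhere. Concretely, I would introduce the support function $h(y) := \max_{x\in \conv(X)} (-y)^T x = -\min_{x \in \conv(X)} y^T x$, which is finite-valued and convex on all of $\R^d$ (compactness of $X$). Rademacher's theorem, or the classical fact that convex functions on $\R^d$ are differentiable Lebesgue-almost everywhere, implies that the set
\[
N := \{y \in \R^d : \arg\min\{y^T x : x \in \conv(X)\} \text{ is not a singleton}\}
\]
is a Lebesgue null set, because non-differentiability of $h$ at $y$ is equivalent to the argmin being a non-singleton face of $\conv(X)$.

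Combining the two steps, for every $y \in \R^d \setminus N$, the point $\eta^*(y)$ is the unique minimizer of $y^T x$ over $\conv(X)$, so if $\eta(y) \neq \eta^*(y)$ then $\eta(y)$ is not a minimizer, which forces $y^T \eta(y) > y^T \eta^*(y)$ strictly. Therefore
\[
\{y : y^T\eta(y) > y^T\eta^*(y)\} \supseteq \{y : \eta(y) \neq \eta^*(y)\} \setminus N,
\]
and since $\{y : \eta(y) \neq \eta^*(y)\}$ has strictly positive measure by hypothesis and $N$ is null, the right-hand side has strictly positive measure, finishing the proof.

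The main obstacle I anticipate is the measure-zero claim for the non-uniqueness set $N$; I would handle it by invoking a.e. differentiability of the convex support function, a standard fact from convex analysis, rather than attempting a direct geometric argument that would need to handle arbitrary (possibly curved) compact $\conv(X)$.
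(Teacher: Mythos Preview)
Your proposal is correct and follows essentially the same route as the paper: both prove the first inequality via $\min_{X} y^T x = \min_{\conv(X)} y^T x$ together with $\eta(y)\in\conv(X)$, and both obtain the strict-inequality set by intersecting $\{y:\eta(y)\neq\eta^*(y)\}$ with the full-measure set where the linear minimizer over $\conv(X)$ is unique. The only difference is that the paper asserts the non-uniqueness set has measure zero without proof, whereas you justify it explicitly via a.e.\ differentiability of the support function---a welcome addition, not a departure.
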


\begin{proof} Since $X$ is compact, $\conv(X)$ is a compact, convex set and $\min\{ y^T x \; : \; x \in \conv(X)\} = \min\{ y^T x \; : \; x \in X\}$ for every $y\in \R^d$. Therefore, since $\eta(y) \in \conv(X)$ and $\eta^*(y) \in \arg\min\{\;y^T x \; : \; x \in X\}$, we have $y^T\eta(y) \geq y^T\eta^*(y)$ for all $y \in \R^d$.

Since $\conv(X)$ is a compact, convex set, the set of $y \in \R^d$ such that $|\arg\min\{ y^T x \; : \; x \in \conv(X)\}| > 1$ is of zero Lebesgue measure. Let $S \subseteq \R^d$ be the set of $y\in \R^d$ such that $\arg\min\{ y^T x \; : \; x \in \conv(X)\}$ is a singleton, i.e., there is a unique optimal solution; so $\R^d\setminus S$ has zero Lebesgue measure. Let $D := \{y \in \R^d: \eta(y) \neq \eta^*(y)\}$. Since we assume that $D$ has strictly positive measure, $D \cap S$ must have strictly positive measure. Consider any $y \in D\cap S$. Since $\min\{ y^T x \; : \; x \in X\} = \min\{ y^T x \; : \; x \in \conv(X)\}$, we must have $\arg\min\{ y^T x \; : \; x \in X\} \subseteq \arg\min\{ y^T x \; : \; x \in \conv(X)\}$. Since $y \in S$, $\arg\min\{ y^T x \; : \; x \in \conv(X)\}$ is a singleton and thus $\eta^*(y)$ is the unique optimum for $\min\{ y^T x \; : \; x \in \conv(X)\}$. Since $y\in D$, $\eta(y) \neq \eta(y^*)$, and therefore $y^T\eta(y) > y^T\eta^*(y)$. Thus, we have the second part of the claim.\end{proof}

Now consider the function $F: \R^d \to \R$ defined by \begin{equation}\label{eq:F}F(\mu) := R(\mu, \delta) - R(\mu, \delta^n_{SA}).\end{equation} To show that $\delta^n_{SA}$ is admissible, it suffices to show that there exists $\bar\mu \in \R^d$ such that $F(\bar \mu) > 0$. For any $\mu\in \R^d$, we have from above
$$\begin{array}{rcl}F(\mu) &= &R(\mu, \delta) - R(\mu, \delta^n_{SA}) \\ & = &\E_y[\mu^T \eta(y)] - \E_y[\mu^T \eta^*(y)]  \\
& = & \mu^T\bigintsss_{\R^d} (\frac{n}{2\pi})^{n/2}\eta(y)e^{\frac{-n\| y - \mu \|^2}{2}}dy - \mu^T\bigintsss_{\R^d} (\frac{n}{2\pi})^{n/2}\eta^*(y)e^{\frac{-n\| y - \mu \|^2}{2}}dy \\
& = & \mu^T\bigintsss_{\R^d} (\frac{n}{2\pi})^{n/2}(\eta(y) - \eta^*(y))e^{\frac{-n\| y - \mu \|^2}{2}}dy
\end{array}$$
where in the second to last equality, we have used the fact that $\overline\xi$ has distribution $N(\mu,\frac{1}{n}I)$. Note that the formula above immediately gives $F(0) = 0$. We will employ Lemma~\ref{lem:calculus} on $F(\mu)$ to show the existence of $\bar\mu \in \R^d$ such that $F(\bar\mu) > 0$. For this purpose, we need to compute the gradient $\nabla F(\mu)$ and Hessian $\nabla^2F(\mu)$. We alert the reader that in these calculations, it is crucial that $\eta(y)$ does not depend on $\mu$ (due to sufficiency of the sample average) and hence it is to be considered as a constant when computing the derivatives below. For ease of calculation, we introduce the following functions $E, G^1, \ldots, G^d: \R^d \to \R^d$:

$$
\begin{array}{rcl}
E(\mu) &:= &(\frac{n}{2\pi})^{n/2}\bigintssss_{\R^d} (\eta(y) - \eta^*(y))e^{\frac{-n\| y - \mu \|^2}{2}}dy, \\
G^i(\mu) & := & (\frac{n}{2\pi})^{n/2}\bigintssss_{\R^d} y_i(\eta(y) - \eta^*(y))e^{\frac{-n\| y - \mu \|^2}{2}}dy, \\
\qquad i & = & 1, \ldots, d.
\end{array}
$$
So $F(\mu) = \mu^TE(\mu)$. We also define the map $G :\R^d \to \R^{d\times d}$ as $$G(\mu)_{ij} = (G^i(\mu))_j.$$ 

\begin{claim}\label{claim:grad} For any $\mu\in \R^d$, $\nabla F(\mu) = E(\mu) + nG(\mu)\mu - n(\mu^TE(\mu)) \mu$. (Note that $G(\mu)\mu$ is a matrix-vector product.)
\end{claim}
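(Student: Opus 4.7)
The plan is to compute $\nabla F(\mu)$ directly by viewing $F(\mu) = \mu^T E(\mu) = \sum_{i=1}^d \mu_i E_i(\mu)$ as a product and applying the standard product rule coordinatewise. The first factor produces $E(\mu)$ immediately: differentiating $\sum_i \mu_i E_i(\mu)$ with respect to $\mu_j$ contributes $E_j(\mu)$ from the derivative of $\mu_j$. The remaining work is to compute the Jacobian $\partial_j E_i(\mu)$ and then recombine.

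To handle $\partial_j E_i(\mu)$, I would invoke Theorem~\ref{thm:diff-normal} to justify differentiating under the integral sign. This is legitimate because $\eta(y) - \eta^*(y)$ is bounded (both functions map into the compact set $\conv(X)$), hence integrable against any Gaussian weight, so Theorem~\ref{thm:diff-normal} applies to each coordinate function. A straightforward calculation gives
\[
\partial_j e^{-\frac{n\|y-\mu\|^2}{2}} = n(y_j - \mu_j)\, e^{-\frac{n\|y-\mu\|^2}{2}},
\]
and therefore
\[
\partial_j E_i(\mu) = n\,\bigl(G^j(\mu)\bigr)_i - n\mu_j\, E_i(\mu) = n\,G(\mu)_{ji} - n\mu_j\, E_i(\mu),
\]
using the definitions of $G^j$ and $G$ in the excerpt.

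Summing over $i$ against $\mu_i$ and adding the term $E_j(\mu)$ yields
\[
\bigl(\nabla F(\mu)\bigr)_j = E_j(\mu) + n\sum_{i=1}^d \mu_i\, G(\mu)_{ji} - n\mu_j\sum_{i=1}^d \mu_i E_i(\mu) = E_j(\mu) + n\,(G(\mu)\mu)_j - n\mu_j\bigl(\mu^T E(\mu)\bigr).
\]
Collecting coordinates gives the claimed formula $\nabla F(\mu) = E(\mu) + nG(\mu)\mu - n(\mu^T E(\mu))\mu$.

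The only subtlety I anticipate is index bookkeeping around $G$: the definition $G(\mu)_{ij} = (G^i(\mu))_j$ makes it easy to conflate $G(\mu)\mu$ with $G(\mu)^T\mu$, so I would be careful to verify that the sum $\sum_i \mu_i G(\mu)_{ji}$ is indeed the $j$-th component of $G(\mu)\mu$ (treating $i$ as the column index). Apart from that, the proof is a direct product-rule and differentiation-under-the-integral computation, with Theorem~\ref{thm:diff-normal} providing the analytic justification.
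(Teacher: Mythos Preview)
Your proposal is correct and follows essentially the same route as the paper: apply the product rule to $F(\mu)=\mu^T E(\mu)$, differentiate under the integral using Theorem~\ref{thm:diff-normal}, and read off the identity $\partial_j E_i(\mu)=n(G^j(\mu))_i-n\mu_j E_i(\mu)$ before reassembling. The only cosmetic difference is that the paper indexes the partial by $\mu_i$ rather than $\mu_j$; your explicit check that $\sum_i \mu_i G(\mu)_{ji}=(G(\mu)\mu)_j$ is exactly the bookkeeping needed, and your remark that $\eta-\eta^*$ is bounded (hence Theorem~\ref{thm:diff-normal} applies) is a small clarification the paper leaves implicit.
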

\begin{cpf} This is a straightforward calculation. Consider the $i$-th coordinate of $\nabla F(\mu)$, i.e., the $i$-th partial derivative 

$$\begin{array}{rcl} \frac{\partial F}{\partial \mu_i} &= &\frac{\partial (\sum_{j}\mu_j E(\mu)_j)}{\partial \mu_i} \\
& = & E(\mu)_i + \sum_{j=1}^d\mu_j \frac{\partial E(\mu)_j}{\partial \mu_i} \\
& = & E(\mu)_i + \sum_{j=1}^d\mu_j \big(\bigintsss_{\R^d}(\frac{n}{2\pi})^{n/2}(\eta(y) - \eta^*(y))_j\frac{\partial(e^{\frac{-n\| y - \mu \|^2}{2}})}{\partial \mu_i}dy\big) \\
& = & E(\mu)_i + \mu^T\bigintsss_{\R^d}(\frac{n}{2\pi})^{n/2}(\eta(y) - \eta^*(y))\frac{\partial(e^{\frac{-n\| y - \mu \|^2}{2}})}{\partial \mu_i}dy\\
\vspace{5pt}& = & E(\mu)_i + \mu^T\bigintsss_{\R^d}(\frac{n}{2\pi})^{n/2}(\eta(y) - \eta^*(y))e^{\frac{-n\| y - \mu \|^2}{2}}(n(y_i - \mu_i))dy \\
& = & E(\mu)_i + n\mu^TG^i(\mu) - n(\mu^TE(\mu))\mu_i
 \end{array} $$
where in the third equality, we have used Theorem~\ref{thm:diff-normal} and the fact that $\eta(y), \eta^*(y)$ do not depend on $\mu$ by sufficiency of the sample average (Proposition~\ref{prop:avg-normal-suff}). The last expression above corresponds to the $i$-th coordinate of $E(\mu) + nG(\mu)\mu - n(\mu^TE(\mu)) \mu$. Thus, we are done.
\end{cpf}

\begin{claim}\label{claim:hessian} $\nabla^2 F(0) = n(G(0)^T + G(0))$.
\end{claim}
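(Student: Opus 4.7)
The plan is to differentiate the expression for $\nabla F(\mu)$ supplied by Claim~\ref{claim:grad} one more time, and then exploit the fact that most of the resulting terms carry a factor of $\mu$ and hence vanish at the origin. Concretely, writing the $i$-th component as
\[
(\nabla F(\mu))_i \;=\; E(\mu)_i \;+\; n\sum_{j=1}^d \mu_j\, G^i(\mu)_j \;-\; n\Big(\sum_{j=1}^d \mu_j\, E(\mu)_j\Big)\mu_i,
\]
I would apply $\partial/\partial \mu_k$ term by term using the product rule. In the second and third summands, every piece that retains the explicit factor $\mu_j$ (or $\mu_i$ multiplied by a sum over $\mu_j$) vanishes when we set $\mu=0$; the only survivors are the bare $\partial E(\mu)_i/\partial \mu_k$ from the first term and $n\,G^i(0)_k$ arising from differentiating the explicit $\mu_k$ inside the middle sum.

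Next I would compute $\partial E(\mu)_i/\partial \mu_k$ at $\mu = 0$. Since $\eta(y) - \eta^*(y)$ does not depend on $\mu$ (by sufficiency of $\overline\xi$, Proposition~\ref{prop:avg-normal-suff}), Theorem~\ref{thm:diff-normal} justifies differentiating under the integral sign, giving
\[
\frac{\partial E(\mu)_i}{\partial \mu_k}\bigg|_{\mu=0} \;=\; \Big(\tfrac{n}{2\pi}\Big)^{n/2}\!\!\int_{\R^d}\!(\eta(y)-\eta^*(y))_i\cdot n(y_k-\mu_k)\, e^{-n\|y-\mu\|^2/2}\,dy\,\bigg|_{\mu=0} \;=\; n\, G^k(0)_i.
\]
Using the convention $G(\mu)_{ij} = G^i(\mu)_j$, this is $n\,G(0)_{ki}$, while the other surviving term $n\,G^i(0)_k$ equals $n\,G(0)_{ik}$.

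Assembling the pieces, the Hessian entry at the origin is
\[
\nabla^2 F(0)_{ik} \;=\; \frac{\partial^2 F}{\partial \mu_k\, \partial \mu_i}\bigg|_{\mu=0} \;=\; n\,G(0)_{ki} + n\,G(0)_{ik} \;=\; n\bigl(G(0)^T + G(0)\bigr)_{ik},
\]
which is the desired identity. I do not anticipate a genuine obstacle here: the argument is essentially bookkeeping, with the only subtle point being the legality of differentiation under the integral, which is exactly the content of Theorem~\ref{thm:diff-normal} and relies crucially on the $\mu$-independence of $\eta$ and $\eta^*$.
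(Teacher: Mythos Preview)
Your proposal is correct and follows essentially the same approach as the paper: differentiate the gradient expression from Claim~\ref{claim:grad} term by term, observe that all terms carrying an explicit factor of $\mu$ vanish at the origin, and compute $\partial E(\mu)_i/\partial\mu_k\big|_{\mu=0}=n\,G^k(0)_i$ via Theorem~\ref{thm:diff-normal} and sufficiency. The paper's argument is identical in substance, differing only in notation (it uses index $j$ where you use $k$ and writes out the intermediate product-rule expansion more explicitly).
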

\begin{cpf} Let us compute $\frac{\partial^2 F}{\partial \mu_i\mu_j}$ using the expression for $\frac{\partial F}{\partial \mu_i}$ from Claim~\ref{claim:grad}.

$$\begin{array}{rcl} \vspace{5pt}\frac{\partial^2 F}{\partial \mu_i\mu_j} &= & \frac{\partial (E(\mu)_i)}{\partial \mu_j} +  n\frac{\partial (\mu^TG^i(\mu))}{\partial \mu_j} -  n\frac{\partial ((\mu^TE(\mu))\mu_i)}{\partial \mu_j} \\
& = & \frac{\partial (E(\mu)_i)}{\partial \mu_j} + n(G^i(\mu))_j + n\mu^T\frac{\partial (G^i)}{\partial \mu_j} - n\frac{\partial (\mu^TE(\mu))}{\partial \mu_j} \mu_i - n(\mu^TE(\mu))\gamma_{ij}
 \end{array} $$
 where $\gamma_{ij}$ denotes the Kronecker delta function, i.e., $\gamma_{ij} = 1$ if $i=j$ and $0$ otherwise. At $\mu = 0$, the above simplifies to \begin{equation}\label{eq:simpler}\frac{\partial^2 F}{\partial \mu_i\mu_j}\bigg|_{\mu=0} = \frac{\partial (E(\mu)_i)}{\partial \mu_j}\bigg|_{\mu=0} + n(G^i(0))_j.\end{equation}
Let us now investigate $\frac{\partial (E(\mu)_i)}{\partial \mu_j}$. By applying Theorem~\ref{thm:diff-normal} and the sufficiency of $\overline\xi$ again, we obtain
$$\begin{array}{rcl} \frac{\partial (E(\mu)_i)}{\partial \mu_j} & = & \bigintsss_{\R^d} (\frac{n}{2\pi})^{n/2}(\eta(y) - \eta^*(y))_i\frac{\partial(e^{\frac{-n\| y - \mu \|^2}{2}})}{\partial \mu_i}dy \\
& = & \bigintsss_{\R^d} (\frac{n}{2\pi})^{n/2}(\eta(y) - \eta^*(y))_i e^{\frac{-n\| y - \mu \|^2}{2}}(n(y_j - \mu_j))dy \\
& = &n (\frac{n}{2\pi})^{n/2}\bigintsss_{\R^d} y_j(\eta(y) - \eta^*(y))_i e^{\frac{-n\| y - \mu \|^2}{2}}dy \\
\vspace{5pt}& & - n(\frac{n}{2\pi})^{n/2}\mu_j \bigintsss_{\R^d} (\eta(y) - \eta^*(y))_i e^{\frac{-n\| y - \mu \|^2}{2}}dy \\
& = & n(G^j(\mu))_i - n\mu_jE(\mu)_i
\end{array}
$$


Therefore, at $\mu = 0$, we obtain that $\frac{\partial (E(\mu)_i)}{\partial \mu_j}\bigg|_{\mu=0} = nG^j(0)_i.$ Putting this back into~\eqref{eq:simpler}, and using the definition of the matrix $G(\mu)$, we obtain 

$$\frac{\partial^2 F}{\partial \mu_i\mu_j}\bigg|_{\mu=0} =nG^j(0)_i + n(G^i(0))_j = n(G(0)_{ji} + G(0)_{ij}).$$
Thus, we obtain that $\nabla^2 F(0) = n(G(0)^T + G(0))$. \end{cpf}
\begin{claim}\label{claim:trace} There exists a direction of positive curvature for $\nabla^2 F(0)$, i.e., there exists $d \in \R^d$ such that $d^T\nabla^2F(0)d > 0$.
\end{claim}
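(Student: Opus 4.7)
The plan is to prove the claim by showing that $\operatorname{trace}(\nabla^2 F(0)) > 0$. Since $\nabla^2F(0) = n(G(0) + G(0)^T)$ is symmetric (by Claim~\ref{claim:hessian}), its trace equals the sum of its eigenvalues, so a strictly positive trace immediately forces the existence of a positive eigenvalue, and hence a unit eigenvector $d$ witnessing $d^T \nabla^2 F(0) d > 0$.

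The computation of the trace reduces cleanly to an integral identity. Using Claim~\ref{claim:hessian} and the definition $G(\mu)_{ij} = (G^i(\mu))_j$,
\[
\operatorname{trace}(\nabla^2 F(0)) \;=\; 2n \operatorname{trace}(G(0)) \;=\; 2n \sum_{i=1}^d G(0)_{ii} \;=\; 2n \left(\tfrac{n}{2\pi}\right)^{n/2} \int_{\R^d} \sum_{i=1}^d y_i \bigl(\eta(y) - \eta^*(y)\bigr)_i \, e^{-n\|y\|^2/2}\, dy,
\]
and the inner sum is precisely $y^T(\eta(y) - \eta^*(y))$. Hence
\[
\operatorname{trace}(\nabla^2 F(0)) \;=\; 2n \left(\tfrac{n}{2\pi}\right)^{n/2} \int_{\R^d} y^T\bigl(\eta(y)-\eta^*(y)\bigr) e^{-n\|y\|^2/2}\, dy.
\]

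Now I invoke Claim~\ref{claim:diff-decision}: the integrand $y^T(\eta(y)-\eta^*(y)) \cdot e^{-n\|y\|^2/2}$ is nonnegative for every $y \in \R^d$ (because $\eta^*(y)$ minimizes $y^T x$ over $X$ while $\eta(y) \in \operatorname{conv}(X)$), and it is strictly positive on the set $\{y : y^T\eta(y) > y^T\eta^*(y)\}$, which has strictly positive Lebesgue measure. The Gaussian factor $e^{-n\|y\|^2/2}$ is strictly positive everywhere, so it does not kill this set. Consequently the integral is strictly positive, yielding $\operatorname{trace}(\nabla^2 F(0)) > 0$.

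There is no real obstacle here — the work was already done in Claim~\ref{claim:diff-decision}, which is the crucial ingredient: without the sign information $y^T\eta(y) \geq y^T\eta^*(y)$ supplied by the optimality of $\eta^*$ over $\operatorname{conv}(X) \supseteq X$, one could not ensure nonnegativity of the integrand. The only subtlety worth noting is that one must use Claim~\ref{claim:diff-decision} in the form that gives \emph{pointwise} nonnegativity together with strict positivity on a set of positive measure, rather than merely nonzero integral of the difference $\eta - \eta^*$ itself; this is why Claim~\ref{claim:diff-decision} was set up to compare the inner product $y^T\eta(y)$ with $y^T\eta^*(y)$ rather than the decision rules directly.
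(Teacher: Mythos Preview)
Your proof is correct and follows essentially the same approach as the paper: compute $\Tr(\nabla^2 F(0)) = 2n\Tr(G(0))$, recognize the resulting integral as $2n(\tfrac{n}{2\pi})^{n/2}\int_{\R^d} y^T(\eta(y)-\eta^*(y))e^{-n\|y\|^2/2}\,dy$, and invoke Claim~\ref{claim:diff-decision} to conclude strict positivity of the trace and hence the existence of a positive eigenvalue. Your write-up is in fact slightly cleaner, since you correctly evaluate the Gaussian factor at $\mu = 0$ throughout.
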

\begin{cpf} Consider the trace $\Tr(\nabla^2 F(0))$ of the Hessian at $\mu=0$. By Claim~\ref{claim:hessian},

$$\begin{array}{rcl}\vspace{5pt}\Tr(\nabla^2 F(0)) & = &{2n}\Tr(G(0))  \\
\vspace{5pt}& = & 2n\sum_{i=1}^d (\frac{n}{2\pi})^{n/2}\bigintsss_{\R^d} y_i(\eta(y) - \eta^*(y))_i e^{\frac{-n\| y - \mu \|^2}{2}}dy \\
& = & 2n (\frac{n}{2\pi})^{n/2}\bigintsss_{\R^d} y^T(\eta(y) - \eta^*(y)) e^{\frac{-n\| y - \mu \|^2}{2}}dy \\
\end{array}$$

By Claim~\ref{claim:diff-decision}, $y^T(\eta(y) - \eta^*(y)) \geq 0$ for any $y \in \R^d$ and $y^T(\eta(y) - \eta^*(y)) > 0$ on a set of strictly positive measure. Therefore, $\int_{\R^d} y^T(\eta(y) - \eta^*(y)) e^{\frac{-n\| y \|^2}{2}}dy > 0$.

Therefore, the trace of $\nabla^2 F(0)$ is strictly positive. Since the trace equals the sum of the eigenvalues of $\nabla^2 F(0)$ (see Section 1.2.5 in~\cite{horn-johnson}), we must have at least one strictly positive eigenvalue. The corresponding eigenvector is a direction of positive curvature.
\end{cpf}

As noted earlier, $F(0) = 0$. Combining Claim~\ref{claim:trace} and Lemma~\ref{lem:calculus}, there exists $\bar \mu \in \R^d$ such that $F(\bar\mu) > 0$.
\end{proof}

\subsection{General covariance}\label{sec:gen-covariance}

The proof in the previous section focused on the family of normal distributions with the identity as the covariance matrix. We now consider any positive definite covariance matrix $\Sigma$ for the normal distribution of $\xi$. In this case, we again consider the function $F(\mu)$ defined in~\eqref{eq:F} and prove that there exists $\bar\mu$ such that $F(\bar \mu) > 0$. The only difference is that in the formulas one must substitute the distribution $\overline\xi \sim N(\mu, \frac{1}{n}\Sigma)$, i.e., the density function everywhere must be $$g_{\mu,\Sigma}(y):= \frac{1}{\sqrt\sigma}\bigg(\frac{n}{2\pi}\bigg)^{n/2}\exp\bigg(-\frac{n}{2}(y-\mu)^T\Sigma^{-1}(y-\mu)\bigg),$$ where $\sigma$ is the determinant of $\Sigma$. 
Redefining
$$
\begin{array}{rcl}
E(\mu) &:= &\frac{1}{\sqrt\sigma}\bigg(\frac{n}{2\pi}\bigg)^{n/2}\bigintss_{\R^d} (\eta(y) - \eta^*(y))\exp\bigg(-\frac{n}{2}(y-\mu)^T\Sigma^{-1}(y-\mu)\bigg)dy, \\
G^i(\mu) & := & \frac{1}{\sqrt\sigma}\bigg(\frac{n}{2\pi}\bigg)^{n/2}\bigintss_{\R^d} y_i(\eta(y) - \eta^*(y))\exp\bigg(-\frac{n}{2}(y-\mu)^T\Sigma^{-1}(y-\mu)\bigg)dy,\; i = 1, \ldots, d,\\
\end{array}
$$
%
%
%
letting $G(\mu)$ be the matrix with $G^i(\mu)$ as rows, and adapting the calculations from the previous section reveals that
\begin{equation}\label{eq:new-hessian}\nabla^2 F(0) = n(\Sigma^{-1}G(0) + G(0)^T\Sigma^{-1})\end{equation}
Claim~\ref{claim:diff-decision} again shows that the trace 
{$\Tr(G(0)) = \bigintsss_{\R^d} y^T(\eta(y) - \eta^*(y))g_{0,\Sigma}(y)dy > 0.$}
This shows that $G(0)$ has an eigenvalue $\lambda$ with positive {\em real} part (since $G(0)$ is not guaranteed to be symmetric, its eigenvalues and eigenvectors may be complex). Let the corresponding (possibly complex) eigenvector be $v$, i.e., $G(0)v = \lambda v$ and $Re(\lambda) > 0$ (denoting the real part of $\lambda$). Following standard linear algebra notation, for any matrix/vector $M$, $M^*$ will denote its Hermitian conjugate~\cite{horn-johnson} (which equals the transpose if the matrix has real entries). We now consider 
$$\begin{array}{rcl} v^* \nabla^2 F(0) v & = & n (v^*(\Sigma^{-1}G(0) + G(0)^T\Sigma^{-1})v)\\
& = & n (v^*\Sigma^{-1}G(0)v + v^*G(0)^T\Sigma^{-1}v) \\
& = & n (v^*\Sigma^{-1}G(0)v + v^*G(0)^*\Sigma^{-1}v)  \\
& = & n (\lambda(v^*\Sigma^{-1}v) + \lambda^* (v^*\Sigma^{-1}v))  \\
& = & 2n (v^*\Sigma^{-1}v) Re(\lambda)
\end{array}$$
Since $\Sigma$ is positive definite, so is $\Sigma^{-1}$. Therefore $v^*\Sigma^{-1}v > 0$ and we obtain that $v^* \nabla^2 F(0) v > 0$. Since $\nabla^2 F(0)$ is a symmetric matrix, all its eigenvalues are real and in particular its largest eigenvalue $\gamma_d$ is positive because $$\gamma_d = \max_{x \in \C^d\setminus\{0\}} \frac{x^* \nabla^2 F(0) x}{x^* x} \geq \frac{v^* \nabla^2 F(0) v}{v^*v} > 0.$$ Thus, $\nabla^2 F(0)$ has a direction of positive curvature and  Lemma~\ref{lem:calculus} implies that there exists $\bar \mu \in \R^d$ such that $F(\bar\mu) > 0$.

\section{An alternate proof for the linear objective based on Bayes' decision rules}\label{sec:lin-proof}

To the best of our knowledge, our proof technique for admissibility from the previous sections is new. 
The conventional way of addressing admissibility uses Bayesian analysis. We recall the basic ideas behind these techniques and provide an alternate proof for Theorem~\ref{thm:linear} using these ideas, which arguably gives a simpler proof. On the other hand, this alternate proof builds upon some well-established facts in statistics, and so is less of a ``first principles" proof compared to the one presented in the previous sections. Moreover, as we noted earlier, the new technique of the previous proof might be useful for future admissibility investigations in stochastic optimization. 

We now briefly review the relevant ideas from Bayesian analysis. 
Consider a general statistical decision problem with $\Theta$ denoting the states of nature, $\cA$ denoting the set of actions, and $\{P_\theta: \theta \in \Theta\}$ the family of distributions on the sample space $\chi$. Let $P^\star$ be any so-called {\em prior distribution} on $\Theta$. For any decision rule $\delta$, one can compute the expected risk, a.k.a., the {\em Bayes' risk} $$r(P^\star, \delta):= \E_{\theta \sim P^\star}[R(\theta, \delta)].$$ A decision rule that minimizes $r(P^\star, \delta)$ is said to be a {\em Bayes' decision rule}.

\begin{theorem}\label{thm:bayes}\cite[Chapter 5, Theorem 2.4]{lehmann2006theory} If a decision rule is the unique\footnote{Here uniqueness is to be interpreted up to differences on a set of measure zero.} Bayes' decision rule for some prior, then it is admissible.
\end{theorem}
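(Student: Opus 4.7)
The plan is to proceed by contradiction, exploiting the basic monotonicity of the expectation operator. Suppose $\delta$ is the unique Bayes decision rule for the prior $P^\star$, yet $\delta$ is inadmissible. By the definition of inadmissibility, there exists a decision rule $\delta'$ with $R(\theta, \delta') \leq R(\theta, \delta)$ for every $\theta \in \Theta$ and $R(\hat\theta, \delta') < R(\hat\theta, \delta)$ for at least one $\hat\theta \in \Theta$. The first step is to integrate this pointwise domination against the prior $P^\star$ to obtain $r(P^\star, \delta') = \E_{\theta \sim P^\star}[R(\theta, \delta')] \leq \E_{\theta \sim P^\star}[R(\theta, \delta)] = r(P^\star, \delta)$.

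The second step is to invoke the hypothesis that $\delta$ is a Bayes rule, which gives the reverse inequality $r(P^\star, \delta) \leq r(P^\star, \delta')$. Combining the two yields $r(P^\star, \delta') = r(P^\star, \delta)$, so $\delta'$ is itself a Bayes rule for the prior $P^\star$. The third step uses the uniqueness hypothesis: any two Bayes rules for $P^\star$ must agree up to a set of measure zero (with respect to the marginal sample distribution $\int P_\theta \, dP^\star(\theta)$), so $\delta$ and $\delta'$ coincide on a set that has full measure under $P_\theta$ for $P^\star$-almost every $\theta$. Consequently $R(\theta, \delta') = R(\theta, \delta)$ for $P^\star$-almost every $\theta$.

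The hard part is then closing the loop: one needs to upgrade this equality of risks from ``$P^\star$-a.e.~$\theta$'' to the specific point $\hat\theta$ where strict inequality was asserted. The standard way around this is to note that, in the formulation of uniqueness used here, two Bayes rules coincide outside a null set of the sample space $\chi$ under \emph{every} $P_\theta$ (not merely $P^\star$-almost every $\theta$); this is automatic in the dominated setups typically used, and in particular in the Gaussian families relevant to this paper. Under this reading of uniqueness, one gets $R(\theta, \delta') = R(\theta, \delta)$ for every $\theta \in \Theta$, directly contradicting the strict inequality at $\hat\theta$ and completing the proof.

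The only genuine obstacle, then, is the measure-theoretic subtlety of what ``unique'' means: if uniqueness were interpreted only modulo the marginal under the particular prior $P^\star$, the argument would fail at a $\hat\theta$ outside the support. In all applications in this paper, however, the prior has full support (or the family $\{P_\theta\}$ is mutually absolutely continuous on the relevant set), so the stronger reading is in force and the contradiction goes through without further work.
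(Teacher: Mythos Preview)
The paper does not supply its own proof of this theorem; it is simply cited as \cite[Chapter 5, Theorem 2.4]{lehmann2006theory} and used as a black box in Section~\ref{sec:lin-proof}. Your argument is the standard textbook proof (essentially the one in Lehmann and Casella): assume a dominating rule $\delta'$, integrate the pointwise inequality against the prior to see that $\delta'$ is also Bayes, then invoke uniqueness to force $\delta' = \delta$ and obtain a contradiction. You have also correctly flagged the one genuine subtlety, namely that ``unique up to measure zero'' must be read with respect to a measure that dominates every $P_\theta$ (or equivalently, in a dominated model where the marginal under the prior dominates each $P_\theta$), so that equality almost everywhere propagates to equality of risks at \emph{every} $\theta$, including the $\hat\theta$ where strict domination was claimed. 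In the Gaussian setting of this paper all $P_\theta$ are mutually absolutely continuous with respect to Lebesgue measure, so this reading is automatic, as you note. There is nothing to compare against in the paper itself; your proof is correct and matches the cited reference.
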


The following is the well-known statement that Gaussian distributions are {\em self-conjugate}~\cite[Example 2.2]{lehmann2006theory}.

\begin{theorem}\label{thm:conjugate} Let $d \in \N$ and let $\Sigma \in \R^{d\times d}$ be fixed. For the joint distribution on $(\xi, \mu) \in (\underbrace{\R^d \times \R^d \times \ldots \times \R^d}_{n \textrm{times}}) \times \R^d$ defined by $\xi | \mu \sim \underbrace{\mathcal{N}(\mu, \Sigma)\times \mathcal{N}(\mu, \Sigma) \times \ldots \times \mathcal{N}(\mu, \Sigma)}_{n \textrm{times}}$ and $\mu \sim \mathcal{N}(\mu_0, \Sigma_0)$, we have that $\mu | \xi \sim \mathcal{N}((\Sigma_0^{-1} + n\Sigma^{-1})^{-1}(\Sigma_0^{-1}\mu_0 + n \Sigma^{-1}\overline\xi), (\Sigma_0^{-1} + n\Sigma^{-1})^{-1}).$ \end{theorem}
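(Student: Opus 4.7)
The plan is to invoke Bayes' rule directly. Writing the joint density as the product of the likelihood and the prior, we get $p(\mu\mid \xi^1,\ldots,\xi^n)\propto p(\xi^1,\ldots,\xi^n\mid\mu)\,p(\mu)$, where the proportionality constant depends on the data but not on $\mu$. Both factors are Gaussian densities, so the posterior is proportional to the exponential of a function that is quadratic in $\mu$; hence it suffices to identify the coefficients of this quadratic form and read off the mean and covariance of a Gaussian in $\mu$.

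First I would expand the log-likelihood using the standard identity
$$\sum_{i=1}^n (\xi^i-\mu)^T\Sigma^{-1}(\xi^i-\mu) = n(\mu-\overline\xi)^T\Sigma^{-1}(\mu-\overline\xi) + \sum_{i=1}^n (\xi^i-\overline\xi)^T\Sigma^{-1}(\xi^i-\overline\xi),$$
which isolates the $\mu$-dependence through $\overline\xi$ only (reflecting the sufficiency established in Proposition~\ref{prop:avg-normal-suff}), plus a residual that can be absorbed into the normalizing constant. After dropping $\mu$-free terms, the log-posterior becomes
$$-\tfrac{n}{2}(\mu-\overline\xi)^T\Sigma^{-1}(\mu-\overline\xi) - \tfrac{1}{2}(\mu-\mu_0)^T\Sigma_0^{-1}(\mu-\mu_0) + \mathrm{const}.$$

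Next I would complete the square in $\mu$. Expanding the two quadratic forms and collecting by powers of $\mu$, the quadratic part is $-\tfrac{1}{2}\mu^T(n\Sigma^{-1}+\Sigma_0^{-1})\mu$ and the linear part is $\mu^T(n\Sigma^{-1}\overline\xi + \Sigma_0^{-1}\mu_0)$. Setting $\Lambda:=n\Sigma^{-1}+\Sigma_0^{-1}$ and $m:=\Lambda^{-1}(n\Sigma^{-1}\overline\xi+\Sigma_0^{-1}\mu_0)$, standard completion of the square rewrites the exponent as $-\tfrac{1}{2}(\mu-m)^T\Lambda(\mu-m)$ up to a $\mu$-independent constant. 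Since a density proportional to $\exp(-\tfrac{1}{2}(\mu-m)^T\Lambda(\mu-m))$ is exactly $\mathcal{N}(m,\Lambda^{-1})$, this gives the stated posterior.

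The only point that needs a brief justification is that $\Lambda=\Sigma_0^{-1}+n\Sigma^{-1}$ is invertible, which is immediate since it is a sum of two positive definite matrices, hence positive definite. No real obstacle arises beyond bookkeeping; the content is the textbook Gaussian conjugacy computation, and the main work is just the linear algebra of completing the square with matrix-weighted quadratic forms.
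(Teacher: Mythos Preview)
Your argument is correct: it is precisely the standard completion-of-the-square computation that establishes Gaussian self-conjugacy, and all steps (the sufficiency identity, the quadratic bookkeeping, and the positive-definiteness of $\Lambda$) are valid. The paper itself does not prove this theorem; it is stated as a well-known fact with a reference to~\cite[Example 2.2]{lehmann2006theory}, so your proposal simply supplies the textbook derivation that the paper omits.
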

\bigskip

\begin{proof}[Alternate proof of Theorem~\ref{thm:linear}]
Consider the prior $P^\star$ to be $\mu \sim \mathcal{N}(0, \Sigma)$, then by Theorem~\ref{thm:conjugate}, $\mu | \xi \sim \mathcal{N}\big(\frac{n}{n+1}\overline \xi, \frac{1}{n+1}\Sigma \big).$ In particular, the mean of $\mu$, conditioned on the observation $\xi$ is simply a scaling of the sample average $\overline \xi$. Now we do a standard Bayesian analysis:

$$\begin{array}{rcl}r(P^\star, \delta) &= &\E_{\mu \sim \mathcal{N}(0, \Sigma)}[\E_{\xi \sim \mathcal{N}(\mu, \Sigma)^n}[\cL(\mu, \delta)] ] \\
& = & \bigintss_{\mu} \bigintss_{\xi} (\mu^T\delta(\xi) - \mu^Tx(\mu)) p(\xi|\mu)p(\mu)d\xi d\mu \\
& = & \bigintss_{\mu} \bigintss_{\xi} \mu^T\delta(\xi) p(\xi|\mu)p(\mu)d\xi d\mu - C
\end{array}$$
where $x(\mu)$ again denotes the optimal solution to $\min\left\{\mu^T x : x \in X\right\}$, $p(\xi|\mu)$ denotes the conditional density function of $\xi|\mu$, $p(\mu)$ is the density function of the prior on $\mu$, and the constant $C$ equals $\bigintss_{\mu} \bigintss_{\xi} \mu^Tx(\mu) p(\xi|\mu)p(\mu)d\xi d\mu = \bigintss \mu^Tx(\mu) p(\mu)d\mu$. To find the decision rule $\delta$ that minimizes $r(P^\star,\delta)$, we thus need to minimize $\bigintss_{\mu} \bigintss_{\xi} \mu^T\delta(\xi) p(\xi|\mu)p(\mu)d\xi d\mu$. We change the order of integration by Fubini's theorem, and rewrite $$r(P^\star, \delta) = \bigintss_{\xi} \bigintss_{\mu} \mu^T\delta(\xi) p(\mu|\xi)p(\xi)d\mu d\xi.$$ Consequently, given the observation $\xi$, we choose $\delta(\xi)\in X$ that minimizes the inner integral $$\bigintss_{\mu} \mu^T\delta(\xi) p(\mu|\xi)d\mu = \frac{n}{n+1}\overline\xi^T \delta(\xi).$$ Thus, we may set $\delta(\xi)$ to be the minimizer in $X$ for the linear objective vector $\frac{n}{n+1}\overline\xi^T$, which is just a scaling of the sample average. Except for a set of measure zero, any linear objective has a unique solution as was noted in the proof of Claim~\ref{claim:diff-decision}. Thus, the Bayes' decision rule is unique and coincides with the sample average estimator $\delta^n_{SA}$. We are done by appealing to Theorem~\ref{thm:bayes}.
\end{proof}
%
%
%

\section{Proof of Theorem~\ref{thm:quadratic} (the scenario with quadratic objective)}\label{sec:quad-proof}

%
%
A more general version of Theorem~\ref{thm:bayes} goes by the name of {\em Blyth's method}. 
Here, we state it as in~\cite{lehmann2006theory} (see Exercise 7.12 in Chapter 5).

\begin{theorem}\label{thm:blyth-thm}
Let $\Theta \subseteq \R^d$ be any open set of states of nature. Suppose $\delta$ is a decision rule with a continuous risk function and $\{\pi_n\}_{n \in \N}$ is a sequence of prior distributions such that:
\begin{enumerate}
	\item $r(\pi_n, \delta) = \int \cR(\theta, \delta)d\pi_n  < \infty$ for all $n \in \N$, where $r$ is the Bayes risk.
	\item For any nonempty open subset $\Theta_0 \subseteq \Theta,$ we have
	$$\lim_{n \to \infty} \frac{r(\pi_n, \delta) - r(\pi_n, \delta^{\pi_n})}{\int_{\Theta_0}\pi_n(\theta)d\theta}= 0
	$$ where $\delta^{\pi_n}$ is a Bayes decision rule having finite Bayes risk with respect to the prior density $\pi_n.$
Then, $\delta$ is an admissible decision rule. 
\end{enumerate}
\end{theorem}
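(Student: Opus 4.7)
The plan is to argue by contradiction. Suppose $\delta$ is inadmissible, so there is a decision rule $\delta'$ with $\cR(\theta, \delta') \leq \cR(\theta, \delta)$ for all $\theta \in \Theta$ and $\cR(\hat\theta, \delta') < \cR(\hat\theta, \delta)$ at some $\hat\theta \in \Theta$. Pointwise domination immediately gives $r(\pi_n, \delta') \leq r(\pi_n, \delta) < \infty$, so the Bayes optimality of $\delta^{\pi_n}$ with respect to $\pi_n$ yields the key inequality $r(\pi_n, \delta^{\pi_n}) \leq r(\pi_n, \delta')$ that I will feed into the Blyth ratio.

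The heart of the argument is to produce a nonempty open set $\Theta_0 \subseteq \Theta$ and a constant $c > 0$ with $\cR(\theta, \delta) - \cR(\theta, \delta') \geq c$ for every $\theta \in \Theta_0$. Granting this, the chain
$$
r(\pi_n, \delta) - r(\pi_n, \delta^{\pi_n}) \;\geq\; r(\pi_n, \delta) - r(\pi_n, \delta') \;=\; \int_\Theta \bigl(\cR(\theta, \delta) - \cR(\theta, \delta')\bigr) d\pi_n(\theta) \;\geq\; c \int_{\Theta_0} \pi_n(\theta) d\theta
$$
follows, where the last inequality uses that the integrand is nonnegative everywhere (since $\delta'$ dominates $\delta$) and at least $c$ on $\Theta_0$. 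Dividing through by $\int_{\Theta_0} \pi_n(\theta) d\theta$ bounds the ratio appearing in hypothesis~(2) below by the fixed positive number $c$ uniformly in $n$, which directly contradicts the assumption that this ratio tends to zero as $n \to \infty$.

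The main obstacle is constructing the open set $\Theta_0$ and the gap $c$. Set $\alpha := \cR(\hat\theta, \delta) - \cR(\hat\theta, \delta') > 0$. Continuity of $\cR(\cdot, \delta)$ at $\hat\theta$ immediately yields an open neighborhood $V_1 \subseteq \Theta$ of $\hat\theta$ on which $\cR(\theta, \delta) \geq \cR(\hat\theta, \delta) - \alpha/4$. The subtle step is converting this into a lower bound on the \emph{difference} $\cR(\cdot, \delta) - \cR(\cdot, \delta')$, for which one also needs to control $\cR(\cdot, \delta')$ from above near $\hat\theta$, i.e., establish some form of upper semicontinuity of the dominating rule's risk. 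This is not automatic from the bare hypotheses, but in all the settings relevant to this paper (Gaussian location models with linear or quadratic losses, as in Theorems~\ref{thm:quadratic} and~\ref{thm:quad-ball}), the risk function of any decision rule with finite Bayes risk is continuous in $\theta$, by dominated convergence applied to $\cR(\theta, \delta') = \E_\theta[\cL(\theta, \delta'(Y))]$ together with the smoothness of the Gaussian density in the location parameter. Continuity of $\cR(\cdot, \delta')$ produces an open neighborhood $V_2$ of $\hat\theta$ on which $\cR(\theta, \delta') \leq \cR(\hat\theta, \delta') + \alpha/4$, and then setting $\Theta_0 := V_1 \cap V_2$ (open and nonempty since it contains $\hat\theta$) and $c := \alpha/2$ gives the required uniform positive gap and completes the proof.
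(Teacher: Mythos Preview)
The paper does not give its own proof of this theorem; it is quoted as a known result from \cite{lehmann2006theory} (Chapter~5, Exercise~7.12). Your argument is the standard textbook proof of Blyth's method and is correct.

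You are also right to flag the one genuine subtlety: the statement assumes continuity only of $\cR(\cdot,\delta)$, yet deriving the contradiction requires a uniform gap $\cR(\theta,\delta)-\cR(\theta,\delta')\ge c>0$ on an open neighborhood of $\hat\theta$, which in turn needs upper semicontinuity of $\cR(\cdot,\delta')$ at $\hat\theta$. This does not follow from the hypotheses as written, but---as you note---it holds automatically in the Gaussian location models where the paper actually invokes the result (Theorem~\ref{thm:diff-normal} supplies exactly the smoothness-under-the-integral needed to make every risk function continuous in $\mu$). Most textbook treatments of Blyth's method either impose continuity of all risk functions as a standing assumption or work in exponential-family settings where it is automatic, so your handling is both honest about the gap and sufficient for the paper's applications.
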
 


\begin{proof}[Proof of Theorem~\ref{thm:quadratic}] For simplicity of exposition, we consider $X$ to be centered at 0, i.e., $\ell_i = -u_i$. The entire proof can be reproduced for the general case by translating the means of the priors to the center of axis-aligned box $X$. The calculations are much easier to read and follow when we assume the origin to be the center. We will show that $\delta^n_{SA}$ satisfies the conditions of Theorem~\ref{thm:blyth-thm} under the priors $$\mu \sim \cN(0, \tau^2 \Sigma), \;\; \tau \in \N, \text{and } \Sigma \text{ is the covariance matrix of } \xi | \mu.$$ First we obtain a simple expression for the loss function $\cL(\mu, x)$ for any action $x \in X$ under the state of nature $\mu\in \R^d$. As noted in Section~\ref{sec:results}, $\E_\xi[F(x,\xi)] = \E_\xi[\frac12 x^Tx - \xi^Tx] = \frac12 x^Tx - \mu^Tx = \frac12\|\mu - x\|^2 - \frac12\|\mu\|^2$. Let the minimum value of this for $x\in X$ be denoted by $B(\mu)$. Thus, $$\begin{array}{rcl}\cL(\mu, x) &= &\E_{\xi \sim \cN(\mu, \Sigma)}[F(x, \xi)] - \E_{\xi \sim D}[F(x(D), \xi)] \\ & = & \frac12\|\mu - x\|^2 - \frac12\|\mu\|^2 - B(\mu)\end{array}$$
Putting this into the numerator of the second condition in Theorem~\ref{thm:blyth-thm} and simplifying (which means that the terms $- \frac12\|\mu\|^2 - B(\mu)$ cancel out), we need to show that for any open set $\Omega_0\subseteq \R^d$, 
 

\begin{equation}\label{eq:blyth-formula}\lim_{\tau \to \infty}\frac{\int_{\R^d}\int_{\R^d} \norm{ \mu - \delta^n_{SA}(\xi) }^2 p(\xi | \mu) \pi_{\tau}(\mu) d\xi d\mu - \int_{\R^d}\int_{\R^d} \norm{ \mu - \delta_{Bayes}(\xi)}^2 p(\xi | \mu) \pi_{\tau}(\mu) d\xi d\mu } {\int_{\Omega_0} \pi_{\tau}(\mu)d\mu} = 0 \end{equation}
where $p(\xi|\mu) = \cN(\mu, \Sigma)$ denotes the conditional density of $\xi$ given $\mu$, and $\pi_\tau(\mu)$ is the marginal density of $\mu$ (of course, the marginal density $\pi_\tau(\mu)$ is nothing but the prior $\cN(0, \tau^2\Sigma)).$
\medskip

Next, let us see how the rule $\delta^n_{SA}$ given by~\eqref{eq:SADR-Q} behaves. Minimizing $\frac12 \|x\|^2 - \overline \xi^T x$ is equivalent to minimizing $\frac12 \|x - \overline\xi\|^2$ since $\overline \xi$ can be regarded as constant for the optimization problem $\min_{x \in X} \frac12 \|x\|^2 - \overline \xi^T x$. Thus, $\delta^n_{SA}$ returns the closest point to $\overline \xi$ in $X$, i.e., \begin{equation}\label{eq:SA-proj}\delta^n_{SA}(\xi) = \Proj_X(\overline \xi)\end{equation}
where the notation $\Proj_X(y)$ denotes the projection of the closes point in $X$ to $y$. 
\medskip

Let us also see what the Bayes' rule $\delta_{Bayes}$ is, i.e., what value of $\delta_{Bayes}(\xi)$ minimizes $$\begin{array}{rcl}\int_{\R^d}\int_{\R^d} \norm{ \mu - \delta^n_{SA}(\xi) }^2 p(\xi | \mu) \pi_{\tau}(\mu) d\xi d\mu  & = & \int_{\R^d}\int_{\R^d} \norm{ \mu - \delta_{Bayes}(\xi)}^2 p(\mu | \xi) m(\xi) d\mu d\xi \\
& = & \int_{\R^d} \E_{\mu|\xi} \norm{ \mu - \delta_{Bayes}(\xi) }^2 m(\xi) d\xi.\end{array}$$ where we have again evaluated the Bayes' risks by switching the order of the integrals and using the conditional density \begin{equation}\label{eq:conditional}p(\mu | \xi) = \cN\left(\frac{n\tau^2}{n\tau^2 + 1}\overline{\xi}, \frac{\tau^2}{n\tau^2 + 1}\Sigma\right) \qquad (\text{by Theorem}~\ref{thm:conjugate}),\end{equation} and the marginal density of $\xi$ is denoted by $m(\xi)$. Since \begin{equation}\label{eq:exp-norm} \E[\norm{ Y - a}^2] = \norm{\E[Y] -a}^2 + \mathbb{V}[Y] \end{equation}
for any random variable $Y$ and constant $a\in \R$, one sees that \begin{equation}\label{eq:Bayes-proj}\delta_{Bayes}(\xi) = \Proj_X\left(\frac{n\tau^2}{n\tau^2 + 1}\overline{\xi}\right).\end{equation}
%
%
%


\bigskip

\noindent Let us now consider the numerator and denominator of the left hand side in \eqref{eq:blyth-formula} separately.

\paragraph{Numerator:}

The numerator is: 
$$\begin{array}{rl}&\bigintsss_{\R^d}\bigintsss_{\R^d} \norm{ \mu - \delta^n_{SA}(\xi) }^2 p(\xi | \mu) \pi_{\tau}(\mu)  d\xi d\mu - \bigintsss_{\R^d}\bigintsss_{\R^d} \norm{ \mu - \delta_{Bayes}(\xi)}^2 p(\xi | \mu) \pi_{\tau}(\mu) d\xi d\mu \\
\\
 =& \bigintsss_{\R^d}\bigintsss_{\R^d} \norm{ \mu - \Proj_X (\overline\xi) }^2 p(\xi | \mu) \pi_{\tau}(\mu) d\xi d\mu - \bigintsss_{\R^d}\bigintsss_{\R^d} \norm{ \mu - \Proj_X (\frac{n\tau^2}{n\tau^2 + 1}\overline\xi)}^2 p(\xi | \mu) \pi_{\tau}(\mu)  d\xi d\mu\\
 \\
 = & \bigintsss_{\R^d}\bigintsss_{\R^d}\bigintsss_{\R^d} \norm{ \mu - \Proj_X (\overline\xi) }^2 p(\xi | \overline\xi) p(\overline\xi | \mu) \pi_{\tau}(\mu)   d\xi d\overline{\xi} d\mu\\
 & -  \bigintsss_{\R^d}\bigintsss_{\R^d}\bigintsss_{\R^d} \norm{ \mu - \Proj_X (\frac{n\tau^2}{n\tau^2 + 1}\overline\xi)}^2 p(\xi | \overline\xi) p(\overline\xi | \mu) \pi_{\tau}(\mu) d\xi d\overline{\xi} d\mu \\
 \\
= & \bigintsss_{\R^d}\bigintsss_{\R^d} \norm{ \mu - \Proj_X (\overline\xi) }^2 p(\overline{\xi} | \mu) \pi_{\tau}(\mu) d\overline{\xi} d\mu - \bigintsss_{\R^d}\bigintsss_{\R^d} \norm{ \mu - \Proj_X (\frac{n\tau^2}{n\tau^2 + 1}\overline\xi)}^2 p(\overline{\xi} | \mu) \pi_{\tau}(\mu) d\overline{\xi} d\mu \\
\\
 = & \bigintsss_{\R^d}\bigintsss_{\R^d} \norm{ \mu - \Proj_X (\overline\xi) }^2 p(\mu | \overline{\xi}) m(\overline{\xi}) d\mu d\overline{\xi} - \bigintsss_{\R^d}\bigintsss_{\R^d} \norm{ \mu - \Proj_X (\frac{n\tau^2}{n\tau^2 + 1}\overline\xi)}^2 p(\mu | \overline{\xi}) m(\overline{\xi}) d\mu d\overline{\xi}, \end{array}$$
where the first equality follows from substituting ~\eqref{eq:SA-proj} and~\eqref{eq:Bayes-proj}, and the last equality follows from the standard trick in Bayesian analysis of switching the order of the integrals.

Since $\overline{\xi} | \mu \sim \mathcal{N}(\mu, \frac{1}{n}\Sigma)$, and $\pi(\mu) \sim \mathcal{N}(0, \tau^2 \Sigma)$, it is a simple exercise to check that $\mu | \overline{\xi} \sim \mathcal{N}(\frac{n\tau^2}{n\tau^2 + 1}\overline{\xi}, \frac{\tau^2}{n\tau^2 + 1}\Sigma),$ and $m(\overline{\xi}) \sim \mathcal{N}(0, \frac{n\tau^2 + 1}{n}\Sigma). $ 
The formula for the numerator above can then be rewritten as \begin{equation}\label{eq:numerator-blyth}\begin{array}{rl}&\bigintsss_{\R^d} \E_{\mu|\overline{\xi}} \norm{ \mu -  \Proj_X (\overline{\xi}) }^2 m(\overline{\xi}) d\overline{\xi} - \bigintsss_{\R^d} \E_{\mu|\overline{\xi}} \norm{ \mu - \Proj_X (\frac{n\tau^2}{n\tau^2 + 1}\overline{\xi})}^2 m(\overline{\xi}) d\overline{\xi} \\
\\
%
%
%
=&\bigintsss_{\R^d} \mathbb{V}(\mu|\overline{\xi}) + \norm{\E_{\mu | \overline{\xi}} \mu - \Proj_X (\overline{\xi})}^2 m(\overline{\xi}) d\overline{\xi} - \bigintsss_{R^d} \mathbb{V}(\mu|\overline{\xi}) + \norm{\E_{\mu | \overline{\xi}} \mu - \Proj_X (\frac{n\tau^2}{n\tau^2 + 1}\overline{\xi})}^2 m(\overline{\xi}) d\overline{\xi} \\
\\
=& \bigintsss_{\R^d} \norm{\frac{n\tau^2}{n\tau^2 + 1}\overline{\xi} -  \Proj_X (\overline\xi)}^2 - \norm{\frac{n\tau^2}{n\tau^2 + 1}\overline{\xi} - \Proj_X (\frac{n\tau^2}{n\tau^2 + 1}\overline{\xi})}^2 m(\overline{\xi}) d\overline{\xi}.\\
\\
 = &\bigintsss_{\R^d} \left( \norm{\Proj_X (\frac{n\tau^2}{n\tau^2 + 1}\overline{\xi}) -  \Proj_X (\overline{\xi})}^2 
 - 2\left\langle{\frac{n\tau^2}{n\tau^2 + 1}\overline{\xi} - \Proj_X (\frac{n\tau^2}{n\tau^2 + 1}\overline{\xi}), \Proj_X (\overline{\xi}) - \Proj_X (\frac{n\tau^2}{n\tau^2 + 1}\overline{\xi}})\right\rangle\right)m(\overline{\xi}) d\overline{\xi},  \end{array}\end{equation}
where the first equality follows from~\eqref{eq:exp-norm}, the second equality follows from the formula for the conditional density $p(\mu|\overline{\xi})$, and the last equality follows from the fact that 
$$
\norm{ a - b }^2 - \norm{ a - c }^2 = \norm{ c - b }^2 - 2 \langle{a - c,b - c}\rangle, \quad \textrm{for any three vectors }a,b,c \in \R^d.
$$

\begin{claim}\label{claim:hypercube-facet}
Consider the box $X= \{x \in \R^d: -u_i \leq x_i \leq u_i \;\; i = 1, \ldots, d\}$. 
Let $F$ be any face of $X$ and so $F' := \frac{n\tau^2 + 1}{n\tau^2}F$ is a face of $X' := \frac{n\tau^2 + 1}{n\tau^2}X$. Suppose $\overline\xi \in F' + N_{F'}$ where $N_{F'}$ denotes the normal cone at $F'$ with respect to $X'$ (see Lemma~\ref{lem:hypercube-faces} and the discussion above it), then $\Proj_X \big(\frac{n\tau^2}{n\tau^2 + 1}\overline\xi\big)$ and $\Proj_X (\overline\xi)$ both lie in $F$.  Consequently, \begin{equation}\label{eq:orthogonal-face}\left\langle{\frac{n\tau^2}{n\tau^2 + 1}\overline\xi - \Proj_X \big(\frac{n\tau^2}{n\tau^2 + 1}\overline\xi\big), \Proj_X (\overline\xi) - \Proj_X \big(\frac{n\tau^2}{n\tau^2 + 1}\overline\xi\big)}\right\rangle = 0.\end{equation}
\end{claim}
\begin{proof}
%

\begin{figure}[h]
  \includegraphics[width=\linewidth]{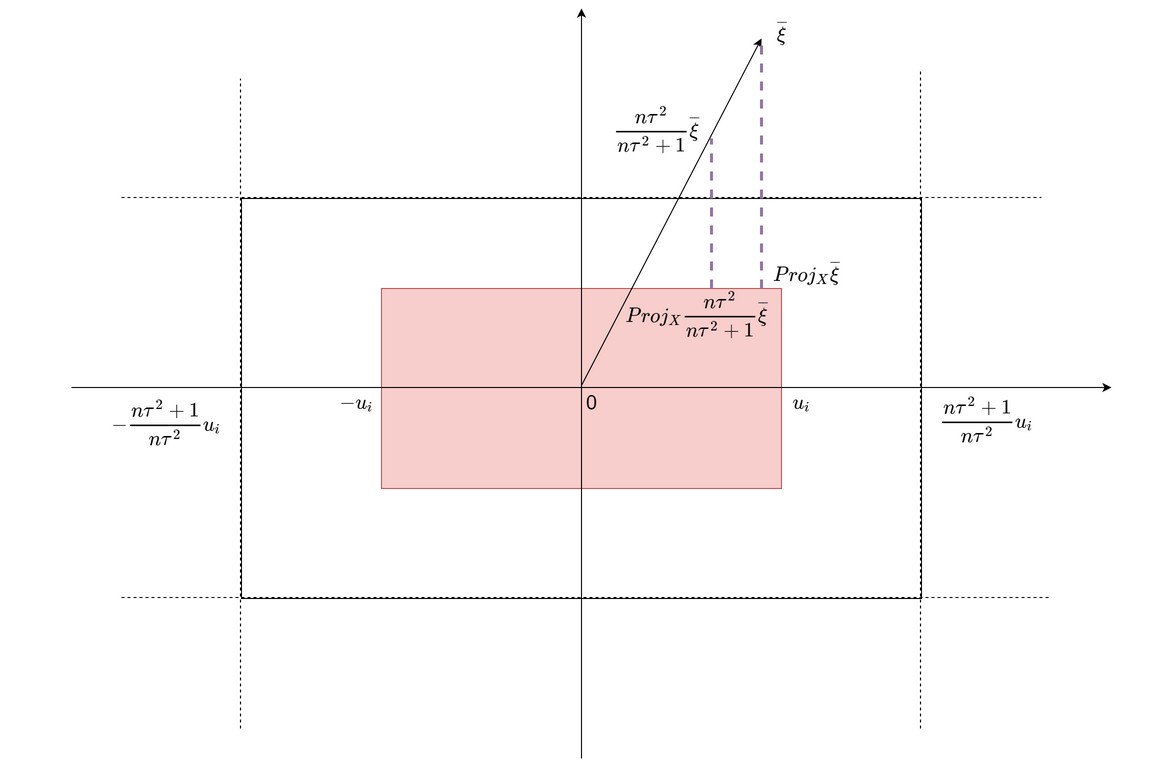}
  \caption{The scaling $X'$ of $X$ and the different regions $F' + N_{F'}$ for different faces $F$ of $X$, with an illustration of Claim~\ref{claim:hypercube-facet}.}\label{fig:hypercube}
\end{figure}

The general formula for the projection onto $X$ is given by \begin{equation}\label{eq:proj-formula}\Proj_X(y) = p, \;\; \textrm{ where } \;\; p_i = sign(y_i)min\{|y_i|, u_i\}.\end{equation} Consider any face $F$ of $X$ and any $\overline\xi \in F' + N_{F'}$; see Figure~\ref{fig:hypercube}. By Lemma~\ref{lem:hypercube-faces} part 1., $|\overline \xi_i| \geq \frac{n\tau^2 + 1}{n\tau^2} |u_i|$ for all $i \in I_F^+ \cup I_F^-$. Therefore, $|\overline\xi_i| \geq \frac{n\tau^2}{n\tau^2 + 1}|\overline\xi_i| \geq |u_i|$ for all $i \in I_F^+ \cup I_F^-$. By the projection formula~\eqref{eq:proj-formula}, for all $i \in I_F^+ \cup I_F^-$ the $i$-th coordinate of both $\Proj_X \big(\frac{n\tau^2}{n\tau^2 + 1}\overline\xi\big)$ and $\Proj_X (\overline\xi)$ are equal to $sign(\overline\xi_i)u_i$. This shows that they both lie on $F$. By the geometry of projections, the vector $\frac{n\tau^2}{n\tau^2 + 1}\overline\xi - \Proj_X \big(\frac{n\tau^2}{n\tau^2 + 1}\overline\xi\big)$ is orthogonal to the face $F$ that contains the projection $\Proj_X \big(\frac{n\tau^2}{n\tau^2 + 1}\overline\xi\big)$. This proves~\eqref{eq:orthogonal-face}.
\end{proof}
%
By appealing to~\eqref{eq:orthogonal-face}, one can reduce~\eqref{eq:numerator-blyth} to \begin{equation}\label{num-simple}\bigintsss_{\R^d} \norm{\Proj_X \big(\frac{n\tau^2}{n\tau^2 + 1}\overline{\xi}\big) -  \Proj_X (\overline{\xi})}^2 m(\overline{\xi}) d\overline{\xi}\end{equation}
Using Lemma~\ref{lem:hypercube-faces} part 2., we decompose the above integral as follows:

$$\eqref{num-simple} = \sum_{F \textrm{ face of } X} \int_{F' + N_{F'}} \norm{\Proj_X \big(\frac{n\tau^2}{n\tau^2 + 1}\overline{\xi}\big) -  \Proj_X (\overline{\xi})}^2 m(\overline{\xi}) d\overline{\xi}.$$
where we have used the notation of Claim~\ref{claim:hypercube-facet} for $F'$. Using the formula in Lemma~\ref{lem:hypercube-faces} part 1., we may simplify the integral further by introducing some additional notation. For any face $F$ of $X$, recall the notation $I_F^+$ and $I_F^-$ from Lemma~\ref{lem:hypercube-faces}. Let $I_F^0 := \{1, \ldots, d\} \setminus (I_F^+ \cup I_F^-)$. We introduce the decomposition of any vector $y \in \R^d$ into $y^+, y^-, y^0$ where $y^+ \in \R^{I_F^+}$ denotes the restriction of the vector $y$ onto the coordinates in $I_F^+$; similarly, $y^- \in \R^{I_F^-}$ is $y$ restricted to $I_F^-$, and $y^0$ is $y$ restricted to $I_F^0$. Denote the corresponding domains $D_F^+ : = \{z \in \R^{I_F^+}: z_i \geq \frac{n\tau^2 + 1}{n\tau^2}u_i\}$, $D_F^- : = \{z \in \R^{I_F^i}: z_i \leq -\frac{n\tau^2 + 1}{n\tau^2}u_i\}$ and $D_F^0 : = \{z \in \R^{I_F^0}: -\frac{n\tau^2 + 1}{n\tau^2}u_i \leq z_i \leq \frac{n\tau^2 + 1}{n\tau^2}u_i\}$. By Lemma~\ref{lem:hypercube-faces} part 1.,

$$\begin{array}{rcl}\eqref{num-simple} & = &
\mathlarger{\sum_{F \textrm{ face of } X}} \bigints_{D_F^0}\bigints_{D_F^-}\bigints_{D_F^+}  \norm{\Proj_X (\frac{n\tau^2}{n\tau^2 + 1}\overline\xi) -  \Proj_X (\overline\xi)}^2 m(\overline\xi) d\overline\xi^+d\overline\xi^- d\overline\xi^0 \\
\\
& \leq & \mathlarger{\sum_{F \textrm{ face of } X}} \bigints_{D_F^0}\bigints_{D_F^-}\bigints_{D_F^+} \;\;\bigg(\frac{1}{(n\tau^2 + 1)^2}\sum_{i\in I_F^0}\overline \xi_i^2\bigg) \;\; m(\overline\xi)d\overline\xi^+d\overline\xi^- d\overline\xi^0 \\
\end{array}$$ where the inequality follows from Claim~\ref{claim:hypercube-facet} which tells us that both $\Proj_X \big(\frac{n\tau^2}{n\tau^2 + 1}\overline{\xi}\big)$ and $\Proj_X (\overline{\xi})$ lie on $F$ and therefore coincide on the coordinates in $I_F^+ \cup I_F^-$; thus, the coordinates in $I_F^+ \cup I_F^-$ vanish in the integrand. Moreover, on any remaining coordinate $i$ that is not set to the bound $u_i$ or $-u_i$, the absolute difference in the coordinate is at most $|\frac{n\tau^2}{n\tau^2 + 1}\overline\xi_i -  \overline\xi_i| = \frac{1}{n\tau^2 + 1}|\overline\xi_i|$, by the projection formula~\eqref{eq:proj-formula}. 
Plugging in the formula for $m(\overline\xi)$, we get

$$
\begin{array}{rcl}
\eqref{num-simple} & \leq & \left(\frac{n}{2\pi(n\tau^2 + 1)\det(\Sigma)^{1/d}}\right)^{d/2}\mathlarger{\sum_{F \textrm{ face of } X}} \bigints_{D_F^0}\bigints_{D_F^-}\bigints_{D_F^+} \;\;\bigg(\frac{1}{(n\tau^2 + 1)^2}\sum_{i\in I_F^0}\overline \xi_i^2\bigg) \;\;  e^\frac{-n\overline\xi^T\Sigma^{-1}\overline\xi}{2(n\tau^2 + 1)}d\overline\xi^+d\overline\xi^- d\overline\xi^0 \\
& = & \left(\frac{n}{2\pi(n\tau^2 + 1)\det(\Sigma)^{1/d}}\right)^{d/2}\mathlarger{\sum_{F \textrm{ face of } X}} \bigints_{D_F^0} \;\;\bigg(\frac{1}{(n\tau^2 + 1)^2}\sum_{i\in I_F^0}\overline \xi_i^2\bigg) \;\; \bigints_{D_F^+} \bigints_{D_F^-}   e^\frac{-n\overline\xi^T\Sigma^{-1}\overline\xi}{2(n\tau^2 + 1)}d\overline\xi^+d\overline\xi^- d\overline\xi^0 \\
& = & \left(\frac{n}{2\pi(n\tau^2 + 1)\det(\Sigma)^{1/d}}\right)^{d/2}\mathlarger{\sum_{F \textrm{ face of } X}} \bigints_{D_F^0} \;\;\bigg(\frac{1}{(n\tau^2 + 1)^2}\sum_{i\in I_F^0}\overline \xi_i^2\bigg) \;\; \cdot C_F (n\tau^2 + 1)^{(d-\dim(F))/2} h(\overline\xi^0,\tau)d\overline\xi^0
\end{array}
$$
where the integral is evaluated using standard Gaussian integral formulas and $C_F$ is a constant independent of $\tau$, $\dim(F)$ denotes the dimension of the face $F$ and $h(\xi^0, \tau)$ is a continuous function which is upper bounded on the compact domain $D_F^0$ for every $F$ by a universal constant independent of $\tau$. 

We now observe that if $I_F^0 = \emptyset$, i.e, $F$ is a vertex of $X$, then the integrand is simply $0$ (in other words, when $\overline\xi$ lies in the normal cone of a vertex $v$ of $X$ translated by $\frac{n\tau^2+1}{n\tau^2}v$, the projections $\Proj_X \big(\frac{n\tau^2}{n\tau^2 + 1}\overline{\xi}\big)$ and $\Proj_X (\overline{\xi})$ are both equal to $v$, and the integral vanishes). Therefore, we are left with the terms where the face has dimensional least 1. Thus, $(n\tau^2 + 1)^{(d-\dim(F))/2}$ can be upper bounded by $(n\tau^2 + 1)^{(d-1)/2}$. Since $D_F^0$ is a compact domain and $h$ is continuous function upper bounded by a universal constant independent of $\tau$, we infer the following upper bound on the numerator $$
 \left(\frac{n}{2\pi(n\tau^2 + 1)\det(\Sigma)^{1/d}}\right)^{d/2}\cdot C \cdot (n\tau^2 + 1)^{(d-5)/2}$$ where $C$ is a constant independent of $\tau$.


\paragraph{Denominator:}

Using the density formula for $\pi_{\tau}(\mu)$, the denominator of \eqref{eq:blyth-formula} is $$\frac{1}{[2\pi\tau^2]^{d/2} \det(\Sigma)^\frac{1}{2}}{\int_{\Omega_0}e^\frac{-\mu^T\Sigma^{-1}\mu}{2\tau^2}d\mu}.$$
\bigskip

\noindent Combining the formulas for the numerator and denominator in~\eqref{eq:blyth-formula}, we have:
$$\begin{array}{rl} & \lim_{\tau \to \infty}\frac{\int_{\R^d}\int_{\R^d} \norm{ \mu - \delta^n_{SA}(\xi) }^2 p(\xi | \mu) \pi_{\tau}(\mu) d\xi d\mu - \int_{\R^d}\int_{\R^d} \norm{ \mu - \delta_{Bayes}(\xi)}^2 p(\xi | \mu) \pi_{\tau}(\mu) d\xi d\mu} {\int_{\Omega_0} \pi_{\tau}(\xi)d\xi} \\
\\
\leq & \lim_{\tau \to \infty} \left(\frac{n\tau^2}{n\tau^2 + 1}\right)^{d/2} \cdot C (n\tau^2 + 1)^{(d-5)/2}\cdot \left(1/\int_{\Omega_0}e^\frac{-\mu^T\Sigma^{-1}\mu}{2\tau^2}d\mu\right)
\end{array}$$

As $\tau \to \infty$, $\int_{\Omega_0}e^\frac{-\mu^T\Sigma^{-1}\mu}{2\tau^2}d\mu$ approaches the volume of $\Omega_0$ which is strictly positive and the first term $(\frac{n\tau^2}{n\tau^2 + 1})^{d/2}$ goes to 1. Moreover, since $d \leq 4$, the middle term $(n\tau^2 + 1)^{(d-5)/2}$ goes to zero.
Consequently, we have:
$$\lim_{\tau \to \infty}\frac{\int_{\R^d}\int_{\R^d} \norm{ \mu - \delta^n_{SA}(\xi) }^2 p(\mu | \xi) m(\xi) d\mu d\xi - \int_{\R^d}\int_{\R^d} \norm{ \mu - \delta_{Bayes}}^2 p(\mu | \xi) m(\xi) d\mu d\xi} {\int_{\Omega_0} \pi_{\tau}(\mu)d\mu} = 0 $$
By Theorem~\ref{thm:blyth-thm}, $\delta^n_{SA}$ is admissible. \end{proof}


%

\begin{proof}[Proof of Theorem~\ref{thm:quad-ball}] The proof when $X$ is a ball follows the same logic as the hypercube case of Theorem~\ref{thm:quadratic}. Observe that now the Bayes' rule $\delta_{Bayes}$ differs from $\delta_{SA}$ only on the compact domain $\frac{n\tau^2 + 1}{n\tau^2}X$; outside this, both give the same point on the boundary of $X$. Thus, the numerator retains a factor of $(n\tau^2+1)^{-2}$, as opposed to the factor of $(n\tau^2 + 1)^{(d-5)/2}$ in the analysis of the hypercube case. Hence, for all $d \geq 1$, the ratio goes to 0 as $\tau \to \infty$.
\end{proof}

\section{Future Work}

To the best of our knowledge, a thorough investigation of the admissibility of solution estimators for stochastic optimization problems has not been undertaken in the statistics or optimization literature. There are several avenues for continuing this line of investigation:

\begin{enumerate} \item The most immediate question is whether the sample average solution for the quadratic objective subject to box constraints, as considered in this paper, continues to be admissible for dimension $d \geq 5$. We strongly suspect this to be true, but our current proof techniques are not able to resolve this either way. If one considers the James-Stein estimators for $\mu$ and then uses these to solve the constrained optimization problems, the standard arguments for inadmissibility break down because of the presence of the box constraints. The problem seems to be quite different, and significantly more complicated, compared to the unconstrained case that has been studied in classical statistics literature. 

\item The next step, after resolving the higher dimension question, would be to consider general convex quadratic objectives $F(x, \xi) = x^TQx + \xi^Tx$ for some fixed positive (semi)definite matrix $Q$ and the constraint $X$ to be a general compact, convex set, as opposed to just box constraints. We believe new ideas beyond the techniques introduced in this paper are needed to analyze the admissibility of the sample average estimator for this convex quadratic program\footnote{When $Q$ is the identity and $X$ is a scaled and translated $\ell_2$ unit norm ball, as opposed to a box, the problem becomes equivalent to minimizing a linear function over the ball. Theorem~\ref{thm:linear} then applies to show admissibility in every dimension. This special case is also analyzed in~\cite{davarnia2017estimation}.}. This problem is interesting from a financial engineering perspective, where the stochastic optimization problem seeks to minimize a coherent risk measure over a convex set. The simplest such measure is a weighted sum of the expectation and the variance of the returns, which can be modeled using the above $F(x,\xi)$.
\item One may also choose to avoid nonlinearities and stick to piecewise linear $F(x,\xi)$ and polyhedral $X$. Such objectives show up in the stochastic optimization literature under the name of {\em news-vendor type problems}. The current techniques of this paper do not easily apply directly to this setting either. In fact, in the simplest setting for the news-vendor problem, one has a function $F :\R\ \times \R \to \R$ given by $F(x,\xi) = cx - p\min\{x, \xi\} + r\max\{0, x - \xi\}$ and $X = [0,U]$ for known constants $0 < r < c < p$ and some given bound $U >0$. In this setting, the natural distributions for $\xi$ are not normal, but distributions whose support is contained in the nonnegative real axis. As a starting point, one can consider the uniform distribution setting where the mean of the uniform distribution, or the width of the uniform distribution or both are unknown.
\item For learning problems, such as neural network training with squared or logistic loss, what can be said about the admissibility of the sample average rule, which usually goes under the name of ``empirical risk minimization"? Is the empirical risk minimization rule an admissible rule in the sense of statistical decision theory? It would be very interesting if the answer actually depends on the hypothesis class that is being learnt. It is also possible that decision rules that take the empirical risk objective and report a {\em local} optimum can be shown to dominate decision rules that report the global optimum, under certain conditions.  This would be an interesting perspective on the debate whether local solutions are ``better" in a theoretical sense than global optima.
\end{enumerate}

\paragraph{Acknowledgments.} We are extremely grateful to Prof. Daniel Naiman for helping us understand basic admissibility results from statistics and numerous follow-up discussions related to this work. He gave his time unquestioningly whenever we approached him. We also owe a great philosophical debt to Prof. Carey Priebe who always nudged us to think about optimization in statistical/probabilistic terms. His aphorism ``The solution to an optimization problem is a random variable!" made us think about optimization in a new light. Prof. Priebe also had numerous discussions about the paper and gave unwavering encouragement and support. Prof. Jim Spall and Long Wang also gave useful input on a draft of the paper. Insightful comments and important pointers to prior work from two anonymous referees helped organize the results with more context. 
Finally, the main inspiration for this work came from listening to a beautiful talk by Prof. G\'erard Cornu\'ejols explaining his work from~\cite{davarnia2017estimation} and~\cite{davarniabayesian}.


\bibliographystyle{plain}
\bibliography{../full-bib}


\end{document}